\theoremstyle{plain}
\newtheorem{prop}{Proposition}	
\newtheorem{definition}[prop]{Definition}
\newtheorem{theorem}[prop]{Theorem}
\newtheorem{cor}[prop]{Corollary}
\newtheorem{lemma}[prop]{Lemma}
\newtheorem{remark}[prop]{Remark}
\numberwithin{prop}{section}
\numberwithin{equation}{section}
\newtheorem*{definition*}{Definition}
\newtheorem*{theorem*}{Theorem}
\newtheorem*{remark*}{Remark}
\newcommand{\NN}{\ensuremath{\mathbb{N}}}
\newcommand{\ZZ}{\ensuremath{\mathbb{Z}}}
\newcommand{\IRS}[1]{\ensuremath{\mathrm{IRS}\left( #1 \right) }}
\newcommand{\IRSfi}[1]{\ensuremath{\mathrm{IRS}_{\mathrm{fi}}\left( #1 \right) }}
\newcommand{\nrm}{\ensuremath{\vartriangleleft }}
\newcommand{\Sub}[1]{\ensuremath{\mathrm{Sub}\left({#1}\right)}}
\newcommand{\Subfi}[1]{\ensuremath{\mathrm{Sub}_{\mathrm{fi}}\left({#1}\right)}}
\newcommand{\Pow}[1]{\ensuremath{\mathrm{Pow}\left({#1}\right)}}
\newcommand{\Probs}[1]{\ensuremath{\mathcal{M}\left({#1}\right)}}
\newcommand{\setr}[1]{\ensuremath{\llbracket #1 \rrbracket}}
\newcommand{\U}{\ensuremath{U}}
\newcommand{\UU}[1]{\ensuremath{\U_{#1}}}
\newcommand{\UP}[1]{\ensuremath{A_{#1}(\mathcal{P})}}
\title[Uncountably many permutation stable groups]{Uncountably many permutation stable groups}
\author{Arie Levit \and Alexander Lubotzky}
\begin{document}

\maketitle

\begin{abstract} 
In a 1937 paper B.H. Neumann constructed an uncountable family of $2$-generated groups.  We prove that all of his groups are permutation stable by analyzing the structure of their invariant random subgroups.
\end{abstract}

\section{Introduction}
\label{sec:intro}

 Let $\mathrm{S}(n)$ denote the symmetric group of degree $n \in \NN$ with   the bi-invariant Hamming metric $d_n$  given by
$$ d_n(\sigma, \tau) = 1 - \frac{1}{n} \left|\mathrm{Fix}(\sigma^{-1} \tau)\right|.$$

Let $G$ be a finitely generated group. An \emph{almost-homomorphism} of $G$ is a sequence of set theoretic maps $f_n : G \to \mathrm{S}(n )$   satisfying
$$ d_{n } \left(f_n(g) f_n(h), \, f_n(gh)\right) \xrightarrow{n\to\infty} 0, \quad \forall g,h \in G.$$
The   almost-homomorphism $f_n$ is \emph{close to a homomorphism} if there is a sequence of group homomorphisms $\rho_n : G \to \mathrm{S}(n )$ satisfying
$$ d_{n } \left(\rho_n(g), \, f_n(g)\right) \xrightarrow{n\to\infty} 0 \quad \forall g\in G.$$

\begin{definition}
\label{def:stable in permutations}
The group $G$ is \emph{permutation stable} (or \emph{P-stable} for short) if every almost-homomorphism of $G$ is close to a homomorphism.
% every $\varepsilon > 0$ there is some $\delta = \delta(\varepsilon) > 0$ and some \emph{finite} subset of defining relations $T \subset R$ with the following property --- for every $n \in \NN$ and subset $\{\pi_\sigma\}_{\sigma \in \Sigma} \subset \mathrm{S}(n)$  satisfying 
%$$ d_n(r(\pi_\sigma), \, \mathrm{id} ) < \delta \quad \forall r \in T$$
%there is a group homomorphism $\rho : G \to \mathrm{S}(n)$ satisfying
%$$ d_n(\rho(\sigma),\, \pi_\sigma) < \varepsilon  \quad \forall \sigma \in \Sigma.$$
\end{definition}

Until quite recently only   few groups were known to be permutation stable: free groups (trivially), finite groups \cite{glebsky2009almost} and abelian groups \cite{arzhantseva2015almost}.  The situation was changed with \cite{becker2019stability}. That work established a connection between stability and invariant random subgroups  of a given amenable group $G$. 
 
Recall that an invariant random subgroup $\mu$  of $G$ is  a conjugation invariant   probability measure on the   space of  all   subgroups of $G$.  The invariant random subgroup $\mu$ is  \emph{co-sofic} if $\mu$  is a limit   of invariant random subgroups supported on finite index subgroups.

\begin{theorem}[\cite{becker2019stability}]
\label{thm:BLT}
A finitely generated  amenable group  $G$ is permutation stable   if and only if every invariant random subgroup of $G$ is co-sofic.
\end{theorem}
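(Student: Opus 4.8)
The plan is to translate the analytic notion of being close to a homomorphism into the language of invariant random subgroups through a metric ultraproduct. Fix a non-principal ultrafilter $\omega$ on $\NN$ and form the universal sofic group $\mathcal{S}_\omega = \prod_{n \to \omega} (\mathrm{S}(n), d_n)$, the quotient of $\prod_n \mathrm{S}(n)$ by the normal subgroup of sequences with $d_n(\sigma_n, \mathrm{id}) \to 0$. An almost-homomorphism $f_n : G \to \mathrm{S}(n)$ descends to an honest homomorphism $\phi : G \to \mathcal{S}_\omega$, and being close to a homomorphism means precisely that $\phi$ lifts to a homomorphism $G \to \prod_n \mathrm{S}(n)$, i.e. to a sequence of genuine permutation representations. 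The group $\mathcal{S}_\omega$ acts in a measure-preserving way on the Loeb probability space $(X, \mu)$ obtained as the ultraproduct of the uniform spaces $\{1, \dots, n\}$, so $\phi$ yields a measure-preserving action of $G$ on $(X, \mu)$. Pushing $\mu$ forward under the stabilizer map $x \mapsto \stab{G}{x} \in \Sub{G}$ produces an invariant random subgroup $\nu_\phi$. First I would set up this dictionary carefully and record the basic compatibility: a genuine representation $\rho : G \to \mathrm{S}(n)$ acts on the finite set $\{1, \dots, n\}$ with finite-index point stabilizers, so its stabilizer IRS is a finite average of finite-index subgroups; consequently a limit of genuine representations has co-sofic stabilizer IRS.

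For the direction ``P-stable $\Rightarrow$ every IRS co-sofic'', I would start from an arbitrary $\nu \in \IRS{G}$, realize it as the stabilizer IRS of some measure-preserving action of $G$ on a space $(Y, \eta)$, and use amenability of $G$ to \emph{soficize} this action: since the orbit equivalence relation of an amenable group is hyperfinite, the action admits sofic approximations compatible with the Hamming metric, giving an almost-homomorphism $f_n$ with $\nu_{\phi} = \nu$. P-stability provides genuine representations $\rho_n$ close to $f_n$; their stabilizer IRS are finite-index-supported and converge to $\nu$ in the ultralimit, so $\nu$ is co-sofic.

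The substantive direction is the converse. Assuming every IRS of $G$ is co-sofic, take an arbitrary almost-homomorphism $f_n$, form $\phi$ and $\nu_\phi$ as above, and choose finite-index invariant random subgroups $\nu_k \to \nu_\phi$ realized by finite permutation representations $\rho^{(k)}$. The goal is to manufacture, for each $n$, an honest $\rho_n : G \to \mathrm{S}(n)$ with $d_n(\rho_n(g), f_n(g)) \to 0$ for every $g$ in a fixed generating set. Here amenability does the decisive work: the ultraproduct action groupoid is hyperfinite, so an Ornstein--Weiss / Rokhlin-tower argument decomposes both $f_n$ and the co-sofic model into almost-invariant F{\o}lner pieces that agree up to small measure. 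Matching these pieces measure-theoretically and then realizing the match by an actual bijection of $\{1, \dots, n\}$, after adjusting cardinalities by passing to disjoint unions and multiplicities of the $\rho^{(k)}$, yields the sought $\rho_n$.

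The main obstacle is exactly this last transport step. The hypothesis ``$\nu_\phi$ is co-sofic'' is a statement about the \emph{measure} on $\Sub{G}$, whereas closeness to a homomorphism is a statement about \emph{explicit} permutations on a structured finite set; bridging the two requires stitching the co-sofic models $\rho^{(k)}$, which live on sets of various sizes, into a single sequence indexed by $n$ with matching cardinalities, and producing a compatible labelling of the unstructured point set underlying $f_n$. Both are controlled by amenability, which simultaneously guarantees the soficity of the relevant actions and makes the F{\o}lner matching quantitative; verifying that the resulting Hamming error tends to zero uniformly over the generating set is where the real care is needed.
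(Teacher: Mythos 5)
This paper does not prove Theorem \ref{thm:BLT} at all: it is imported verbatim from \cite{becker2019stability} and used as a black box, so there is no in-paper argument to compare against. Measured against the actual proof in that reference, your outline follows essentially the same strategy --- the dictionary \emph{almost-homomorphism} $\rightsquigarrow$ homomorphism to the metric ultraproduct $\rightsquigarrow$ p.m.p.\ action on the Loeb space $\rightsquigarrow$ stabilizer invariant random subgroup in $\Sub{G}$, with the easy direction done exactly as you describe (a genuine $\rho_n : G \to \mathrm{S}(n)$ has stabilizer IRS equal to a finite average of Dirac masses at index-$\le n$ subgroups, and Hamming closeness forces weak-* closeness of the induced IRSs).

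The genuine gap is the step you yourself flag as ``the main obstacle'': the claim that if the IRS induced by $(f_n)$ is a weak-* limit of finite-index-supported IRSs, then $(f_n)$ can be matched, degree by degree and up to vanishing Hamming error, with actual permutation representations. This is not a routine Rokhlin-tower manipulation; it is the entire technical content of the theorem, namely that for an amenable group two sofic approximations inducing the same invariant random subgroup are asymptotically conjugate (an Elek-type rigidity statement proved via the Ornstein--Weiss quasi-tiling machinery). Two specific points where your sketch would need real work: (i) weak-* convergence $\nu_k \to \nu_\phi$ in $\IRS{G}$ is a priori weaker than convergence of the full local statistics of the approximations, and the fact that for amenable groups the IRS already determines the approximation up to asymptotic conjugacy is precisely the theorem being invoked, so the argument as organized risks circularity; (ii) the degree-matching problem (the models realizing $\nu_k$ live on sets of unrelated cardinalities, while $f_n$ lives on $\{1,\dots,n\}$) must be solved quantitatively, with the error uniform over a generating set, before one can even state the conjugacy claim. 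Without a proof of the asymptotic conjugacy theorem the hard direction remains open in your write-up; with it, the outline is correct and coincides with the proof in \cite{becker2019stability}.
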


 This new viewpoint enabled the authors of \cite{becker2019stability} to deduce that some groups whose invariant random subgroups are easy to analyze are  permutation stable, namely polycyclic-by-finite  as well as the  Baumslag--Solitar groups $B(1,n)$ for all $ n \in \ZZ$. That work motivated a deeper analysis of the invariant random subgroups of more complicated groups: the lamplighter group, as well as any wreath product of finitely generated abelian groups, is shown in \cite{levit2019infinitely} to be  permutation stable, while \cite{zheng2019rigid} showed that the Grigorchuk group is such.
 The goal of this paper is to go further and   prove

\begin{theorem}
\label{thm:main theorem }
There exist uncountably many   $2$-generated permutation stable groups.
\end{theorem}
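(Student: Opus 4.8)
The plan is to run each member of Neumann's family through the amenable criterion of Theorem \ref{thm:BLT}. Fix the odd primes $p \ge 5$ and, inside each alternating group $\mathrm{Alt}(p)$, the generating pair consisting of the $3$-cycle $\sigma_p = (1\,2\,3)$ and the $p$-cycle $\tau_p = (1\,2\,\cdots\,p)$. For an infinite set $S$ of such primes let $G_S = \langle \sigma, \tau \rangle \le \prod_{p \in S}\mathrm{Alt}(p)$ be the subdirect product generated by the diagonal elements $\sigma = (\sigma_p)_{p\in S}$ and $\tau = (\tau_p)_{p \in S}$. The first step is structural. Because the factors $\mathrm{Alt}(p)$ are pairwise non-isomorphic nonabelian finite simple groups and all finite projections of $G_S$ are onto, a standard subdirect-product argument shows that the restricted product $L_S = \bigoplus_{p\in S}\mathrm{Alt}(p)$ is contained in $G_S$ as a normal subgroup. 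Since $L_S$ is locally finite and the quotient $G_S/L_S$ is amenable — indeed it is the marked-group limit of the pairs $(\mathrm{Alt}(p),\sigma_p,\tau_p)$, namely the elementary amenable group $\mathrm{Alt}_{\mathrm{fin}}(\ZZ)\rtimes\ZZ$ — the group $G_S$ is itself elementary amenable, so Theorem \ref{thm:BLT} applies.

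By Theorem \ref{thm:BLT} it then suffices to prove that every invariant random subgroup $\mu \in \IRS{G_S}$ is co-sofic. I would set up the natural approximation: for a finite $T \subseteq S$ write $\pi_T \colon G_S \to \prod_{p\in T}\mathrm{Alt}(p)$ for the projection onto the finite congruence quotient, and for $H \in \Sub{G_S}$ form the finite-index subgroup $\pi_T^{-1}(\pi_T(H)) \supseteq H$. Pushing $\mu$ forward under $H \mapsto \pi_T^{-1}(\pi_T(H))$ produces, for each $T$, an invariant random subgroup supported on finite-index subgroups of $G_S$. The content is to show that these converge to $\mu$ in $\Sub{G_S}$ as $T$ exhausts $S$, which is exactly the assertion that $\mu$-almost every $H$ is recovered from its images in the finite congruence quotients.

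The crux, and the step I expect to be the main obstacle, is that a typical subgroup $H$ need not be closed in this congruence topology: the locally finite part $L_S$ and the non-residually-finite quotient $\mathrm{Alt}_{\mathrm{fin}}(\ZZ)\rtimes\ZZ$ both obstruct a naive closure argument. I would dissolve this by analyzing $H$ along the series $1 \trianglelefteq L_S \trianglelefteq G_S$. On the locally finite part, conjugation invariance of $\mu$ together with the simplicity of each factor $\mathrm{Alt}(p)$ forces $H \cap L_S$ to be, coordinatewise, a sub-product of factors selected by an invariant process on $S$; being a direct limit of finite subgroups, this part is automatically captured by the finite quotients $\pi_T$. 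The remaining work is to control the image of $\mu$ in $\IRS{\mathrm{Alt}_{\mathrm{fin}}(\ZZ)\rtimes\ZZ}$ and to splice the two analyses into a genuine Chabauty approximation of $\mu$ by the $\pi_T^{-1}(\pi_T(H))$; amenability of $G_S$ and the abundance of finite-index subgroups coming from residual finiteness are the levers I would use here. Granting co-soficity for every $\mu$, Theorem \ref{thm:BLT} makes each $G_S$ permutation stable, and since the isomorphism type of $G_S$ remembers its set of finite simple quotients $\{\mathrm{Alt}(p) : p \in S\}$, the assignment $S \mapsto G_S$ is injective up to isomorphism and yields the required $2^{\aleph_0}$ examples, proving Theorem \ref{thm:main theorem }.
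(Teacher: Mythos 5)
Your reduction --- Neumann's family, its amenability, the appeal to Theorem \ref{thm:BLT}, and the injectivity of $S \mapsto G_S$ --- matches the paper's strategy, but the engine you propose for proving co-soficity cannot work, and the failure is not a technicality to be ``spliced'' away. The approximants $\pi_T^{-1}(\pi_T(H))$ always contain $H$, so as $T$ exhausts $S$ they can only converge to the closure of $H$ in the congruence topology of $G_S$. Now take $H = L_S = \bigoplus_{p \in S}\mathrm{Alt}(p)$, which is normal in $G_S$, so that $\delta_{L_S}$ is an invariant random subgroup your argument must handle. For every finite $T$ one has $\pi_T(L_S) = \prod_{p\in T}\mathrm{Alt}(p)$, hence $\pi_T^{-1}(\pi_T(L_S)) = G_S$: your scheme outputs the constant sequence $\delta_{G_S}$, which does not converge to $\delta_{L_S}$. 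Worse, since $G_S/L_S \cong \mathrm{Alt}_{\mathrm{fin}}(\ZZ)\rtimes\ZZ$ is not residually finite, every finite-index subgroup of $G_S$ containing $L_S$ contains the full preimage of $\mathrm{Alt}_{\mathrm{fin}}(\ZZ)$ (an infinite simple group has no proper finite-index subgroups), so \emph{no} sequence of finite-index subgroups containing $L_S$ can Chabauty-converge to $L_S$. This is precisely the phenomenon the paper isolates: $\delta_{L_S}$ must be approximated ``from the side,'' by averaging the conjugates $fK_if^{-1}$ of finite-index subgroups $K_i$ \emph{not} containing $L_S$ (e.g.\ $K_i = \ker(G_S \to \mathrm{Alt}(p_i))$) over a F{\o}lner set, with convergence of these averages to $\mu$ supplied by the Lindenstrauss pointwise ergodic theorem. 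That Weiss-approximation mechanism is the paper's main technical contribution and is the missing idea in your outline.

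Two further gaps. Your claim that conjugation-invariance of $\mu$ plus simplicity of the factors forces $H \cap L_S$ to be a sub-product of factors is false for a general subgroup in the support of a general invariant random subgroup: it holds when $H \cap L_S$ is normal in $L_S$ (for instance when $H$ surjects onto almost every coordinate, which is how the paper exploits it in Proposition \ref{prop:large image implies finite index}), but an invariant random subgroup may well charge subgroups meeting $L_S$ in a non-normal finite subgroup, e.g.\ the uniform measure on the conjugates of a non-normal subgroup of a single factor. Finally, for the case where the image of $\mu$ in the abelianization is nontrivial, the needed input is Vershik's classification of the invariant random subgroups of $\mathrm{Sym}_{\mathrm{fin}}(\ZZ)$, which forces the image of almost every $H$ in $\mathrm{Alt}_{\mathrm{fin}}(\ZZ)\rtimes\ZZ$ to have finite index, combined with the purely algebraic fact that such an $H$ already has finite index in $G_S$; ``controlling the image of $\mu$'' in the quotient is exactly where this classification is indispensable, and no substitute for it appears in your sketch.
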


  B.H. Neumann \cite{neumann1937some} gave an explicit construction of an uncountable family of pairwise non-isomorphic $2$-generator groups. We   actually prove that all of those groups are permutation stable.

The B.H. Neumann family of    groups is constructed as follows.
Let $\mathcal{P} = \left(n_i\right)_{i \in \NN}$ be any monotone increasing sequence of odd integers with $n_1 \ge 5$. For all $ i\in \NN$ write $n_i = 2r_i + 1$ and denote $\setr{r_i} = \{ n \in \ZZ \: : \: |n| \le r_i\}$ so that $|\setr{r_i}| = n_i$. Let 
$$A(\mathcal{P}) = \prod_{i\in\NN} \mathrm{Alt}(\setr{r_i})$$  where $ \mathrm{Alt}(\setr{r_i})$ is the group of all even permutations of the set $\setr{r_i}$ for each $i \in \NN$. The group   $A(\mathcal{P})$ is profinite.

The B.H. Neumann group $G(\mathcal{P})$ associated to the sequence $\mathcal{P}$ is the countable subgroup of $A(\mathcal{P}) $ generated by the two elements $\tau = (\tau_i)_{i\in\NN}$ and $\sigma = (\sigma_i)_{i\in\NN}$,  where each permutation $\tau_i$ is the $3$-cycle 
$$ \tau_i = \left(-1,0,1\right) \in \mathrm{Alt}(\setr{r_i})$$
and each permutation $\sigma_i$ is the full $n_i$-cycle
$$ \sigma_i = \left(-r_i,-r_i+1,\ldots,-1,0,1,\ldots,r_i-1,r_i\right) \in \mathrm{Alt}(\setr{r_i}).$$

%This procedure gives a group $G(\mathcal{P})$ for every such sequence $\mathcal{P}$.
 B.H. Neumann showed  --- see also \S\ref{sec:Neumann groups} --- that the groups $G(\mathcal{P})$ and $G(\mathcal{P'})$ are isomorphic if and only if the two sequences $\mathcal{P}$ and $ \mathcal{P}'$ as above coincide. So his family of groups is indeed uncountable. It has already been observed in \cite{lubotzky1993groups} --- see also \S\ref{sec:Neumann groups}   --- that all these  groups are amenable.  So their stability, which is a purely group theoretic property, can be studied via their invariant random subgroups by Theorem \ref{thm:BLT}. We will prove
 
 \begin{theorem}
 \label{thm:main result IRS}
 All invariant random subgroups of each B.H. Neumann group $G(\mathcal{P})$ are co-sofic.
 \end{theorem}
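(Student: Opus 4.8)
The plan is to exploit that $G$ embeds densely in the profinite group $A(\mathcal P)=\prod_{i}\mathrm{Alt}(\setr{r_i})$ with the two generators acting \emph{coordinatewise}: conjugation by $\sigma$ or $\tau$ preserves each factor $\mathrm{Alt}(\setr{r_i})$ and acts on it through the surjection $\pi_i\colon G\twoheadrightarrow\mathrm{Alt}(\setr{r_i})$, so $G$ never permutes the factors. First I would record the normal structure underlying this: the kernel $K=\ker\phi$ of the ``unwrapping'' homomorphism $\phi\colon G\to\Gamma$ onto the limit group $\Gamma=\mathrm{Alt}_{\mathrm{fin}}(\ZZ)\rtimes\ZZ$ (obtained by letting $\setr{r_i}\to\ZZ$) consists exactly of the finitely supported elements, is locally finite, and is \emph{dense} in $A(\mathcal P)$ because $\pi_{\{1,\dots,n\}}(K)=\prod_{i\le n}\mathrm{Alt}(\setr{r_i})$ for every $n$ (a subdirect product of pairwise non-isomorphic nonabelian simple groups is full). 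Since the finite quotients of $\Gamma$ are all cyclic while those of $\prod_{i\le n}\mathrm{Alt}(\setr{r_i})$ are perfect, the finite quotients of $G$ are governed precisely by the coordinate projections $\pi_F$ and the winding homomorphism $\omega\colon G\to\ZZ$. I would then pass to an ergodic IRS $\mu$, which suffices because the co-sofic IRS form a weak-$*$ closed convex set and ergodic decomposition is available.

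The backbone is a soft approximation valid for \emph{every} IRS: fixing a chain $G_n\vartriangleleft G$ of finite index normal subgroups with $\bigcap_n G_n=\{e\}$, the push-forwards of $\mu$ under the equivariant Borel maps $H\mapsto HG_n$ are IRS supported on finite index subgroups, and they converge weak-$*$ to the law $\mu'$ of the profinite closure $\overline H\cap G$. Hence $\mu'$ is automatically co-sofic, and the entire problem is reduced to controlling the ``non-separable defect'' $\overline H\cap G\smallsetminus H$, which by the first paragraph lives in the directions invisible to finite quotients of $G$, i.e. in the $\mathrm{Alt}_{\mathrm{fin}}(\ZZ)$-part of $\Gamma$. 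The key point is that these directions, though invisible to $\Gamma$ (which is not residually finite), \emph{are} detected inside $G$ by the genuine finite simple factors: a finitely supported element is trivial on all large coordinates, whereas an infinitely supported one is nontrivial on arbitrarily large coordinates. Concretely, for the normal example $\mu=\delta_K$ one matches the $W$-pattern of $K$ on any finite $W$ by the finite index subgroup $\ker\pi_F$, with $F$ a finite set of large coordinates chosen so that every $w\in W\smallsetminus K$ is nontrivial on some coordinate of $F$; letting $W$ exhaust $G$ shows $\delta_K$ is co-sofic even though $K$ is dense, and crucially this does \emph{not} proceed through the backbone pull-backs.

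The main step, and the expected obstacle, is to carry out this ``coordinate detection'' simultaneously for a general ergodic $\mu$ and in a conjugation invariant fashion. I would prove a structure theorem for the coordinate profile of a $\mu$-random $H$ -- using ergodicity and the rigidity from the pairwise non-isomorphic simple factors, so that no diagonal gluing between distinct coordinates occurs and each single trace $H\cap\mathrm{Alt}(\setr{r_i})$ has the law of an IRS of a \emph{finite} group, hence co-sofic -- and then combine a small-scale pull-back on an initial block $\{1,\dots,N\}$ (recovering the separable/profinite-closure part) with a large-scale intersection of coordinate kernels $\ker\pi_F$, chosen measurably and equivariantly, to excise the non-separable $\mathrm{Alt}_{\mathrm{fin}}(\ZZ)$-defect. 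The delicate mechanism is that the factors $\mathrm{Alt}(\setr{r_i})$, built from the $n_i$-cycle $\sigma_i$, are Benjamini--Schramm models of the $\ZZ$-geometry of $\mathrm{Alt}_{\mathrm{fin}}(\ZZ)$: as $n_i\to\infty$ the wrapped model agrees with the unwrapped one on any fixed window, so patterns in $\Gamma$ with no finite witness acquire one in $\mathrm{Alt}(\setr{r_i})$ for $i$ large. Verifying that the resulting finite index IRS are genuinely conjugation invariant and converge to $\mu$ -- i.e. that the $\Gamma$-level data of $\mu$ transports coherently through the whole family of wrapped models -- is where the real work lies; the transversal $\ZZ$-direction contributed by $\omega$ is handled separately and is co-sofic by the polycyclic case of \cite{becker2019stability}.
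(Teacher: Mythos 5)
Your proposal correctly identifies the two structural poles of the problem (the residually finite ``coordinate'' directions versus the non--residually-finite tail quotient), and your ``backbone'' observation --- that the push-forwards of $\mu$ under $H\mapsto HG_n$ are co-sofic and converge to the law of the profinite closure of $H$ --- is true but, as you concede, only restates the problem: everything hinges on the defect $\overline{H}\smallsetminus H$. The first genuine gap is your treatment of the case where a $\mu$-random $H$ is \emph{not} contained in the derived subgroup $N$. You propose to handle the $\ZZ$-direction ``separately\ldots by the polycyclic case,'' but an IRS of $G$ does not decouple into an IRS of $N$ and an IRS of $\ZZ$: if $H$ contains an element $\sigma^k u$, then $H\cap N$ is normalized by it, and controlling such $H$ is exactly where the paper needs a nontrivial external input, namely Vershik's classification of the invariant random subgroups of $\mathrm{Sym}_{\mathrm{fin}}(\ZZ)$ (Theorem \ref{theorem:Vershik} and Corollaries \ref{cor:structure of normalized IRS of Uinft}, \ref{cor:Vershik IRS}). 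That classification forces the image $\widetilde{t}(H)$ in $V=\UU{\infty}\rtimes\left<\bar\sigma\right>$ to have finite index almost surely, and a separate group-theoretic argument (Proposition \ref{prop:large image implies finite index}) upgrades this to $[G:H]<\infty$, so $\mu$ is already supported on finite index subgroups. Nothing in your outline supplies either ingredient, and without them this case is open.

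The second gap is the mechanism for the case $H\le N$. You plan to choose the approximating finite index subgroups ``measurably and equivariantly'' as a function of $H$ and you flag that verifying invariance and convergence ``is where the real work lies'' --- but that \emph{is} the work, and there is no evident conjugation-equivariant assignment $H\mapsto K_i(H)$ with the required properties. The paper avoids equivariance entirely: for a \emph{single} $\mu$-generic $H\le N$ it sets $K_i=(H\cap L_i)(G_i\cap D_i)$ and averages conjugates over the explicit F\o lner sets $F_i=\{\sigma^j \: : \: j\in\setr{r_i-1}\}L_i$; the Lindenstrauss pointwise ergodic theorem guarantees that the empirical measures $\frac{1}{|F_i|}\sum_{f\in F_i}\delta_{fHf^{-1}}$ converge to $\mu$, while an elementary counting estimate (Proposition \ref{prop:adapted}) shows that the finite-index and true empirical measures merge. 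Amenability and the ergodic theorem are the engine here and are absent from your outline. A further incorrect point: the claim that ``no diagonal gluing between distinct coordinates occurs'' because the simple factors are pairwise non-isomorphic fails for general subgroups (it holds only for subgroups surjecting onto each factor, as in Lemma \ref{lem:group theory}, or for normal subgroups); a generic $H\le N$ is not determined by its coordinate traces, which is why the paper works with the finite subgroups $L_i$ and their complements $G_i$ rather than with traces.
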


In this work, as in \cite{levit2019infinitely}, the proof of our Theorem \ref{thm:main result IRS}  uses methods of ergodic theory, and relies on the Lindenstrauss pointwise erogdic theorem for amenable groups \cite{lindenstrauss2001pointwise}.

\subsection*{Outline of the paper}
 The paper is organized as follows.
 
  In \S\ref{sec:Neumann groups} we recall in a slightly modified way the  construction of the B.H. Neumann groups $G = G(\mathcal{P})$. We analyze their structure and provide two short exact sequences to be used later. For instance,   we will show that the derived subgroup $G'$  of $G$ is mapped onto $\mathrm{Alt}_\text{fin}(\ZZ)$, the group of all even permutations of the set $\ZZ$ with finite support.
  
   In \S\ref{sec:Weiss approx} we will recall some basic definitions and facts about invariant random subgroups, as well as the notion of Weiss approximations relying on the pointwise ergodic theorem.
   
   In \S\ref{ref:vershik} we recall Vershik's theorem which gives a complete classification of the invariant random subgroups of the group $\mathrm{Sym}_\text{fin}(\ZZ)$. This classification will be used in the proof of the main theorem. 
   
   The proof will be given in \S\ref{sec:external case}. One main ingredient of the proof is the explicit construction done in \S\ref{sec:internal case} of a Weiss approximation to any subgroup of $G$ contained in the derived subgroup $G'$.  The other main ingredient has to do with showing that every invariant random subgroup of $G$ not entirely contained in the derived subgroup $G'$ is already supported on finite index subgroups. 
     
An interesting aspect of the proof is that $G$ admits a certain normal subgroup $\U$ so that $G/ \U$ is not residually finite. This means that the atomic invariant random subgroup of $G$ supported on the this normal subgroup $\U$ cannot be approximated \enquote{from above} by invariant random subgroups corresponding to   finite index subgroups containing $\U$. Instead, it can  only be approximated \enquote{from the side}, so to speak, using invariant random subgroups which do not come from $G/\U$, see \S\ref{sec:internal case}.

\vspace{5pt}
\emph{We follow the convention $x^y = y^{-1} x y$ for conjugation.}

\subsection*{Acknowledgements}
%The authors would like to thank Oren Becker for stimulating discussions and in particular for bringing to our attention Corollary \ref{cor:residually finite amenable has Foler sequence of finite to one transverals}. 

The first named author would like to thank Mikl\'os Ab\'ert for insightful conversations on invariant random subgroups. The first named author is indebted for support from the NSF. The second named author   is indebted for support from the NSF, the ISF and the European
Research Council (ERC) under the European Union's Horizon 2020
research and innovation program (grant agreement No. 692854).
%\subsection*{Outline of the paper}
%EXPLAIN what is contained in every section.
%\subsection*{The split and the non-split case}
%	TODO EXPLAIN ABOUT SPLIT AND NNOAssume that the group $G$ is split so that $G \cong Q \ltimes N$. In this case we may identify $\widehat{Q}$ with a subgroup of $G$ and there is an obvious choice of a lift. This lift is obviously tame in our terminology. In particular, assuming that $G$  split would make a large part of the work in this section considerably simpler.
%	
%MORE THINGS WE DON'T NEED
%\begin{enumerate}
%\item the partition $P_i = P'_i + P''_i$,
%\end{enumerate}
%
% 

%\marginpar{VERIFY ALL CONJUGATIONS THROUGHOUT WORK CHANGE TO NEW NOTATION}

\section{The B.H.  Neumann Groups}
\label{sec:Neumann groups}

We recall in detail the definition of the B.H. Neumann groups and present some of their properties to   be used later.

\subsection*{Basic notations} 
For each $r \in \NN$ write
$$ \setr{r} = \{ m \in \ZZ \: : \: |m| \le r \}$$
so that $\setr{r}$ is a set of integers of size $ 2r+1$. Let   $\UU{r}$ denote the group of all even permutations of the set $\setr{r}$. So $\UU{r}$ is naturally isomorphic to the alternating group $A_{2r+1}$. Let $\UU{\infty}$ denote the group $ \mathrm{Alt}_\text{fin}(\ZZ)$ of all even  permutations of the set $\ZZ$ with finite support. Note that $\UU{\infty} = \varinjlim_r \UU{r}$  in a natural way. Therefore $\UU{\infty}$ is a simple group.

Let $\mathcal{P} = (n_i)_{i\in\NN}$ be a monotone sequence of odd integers satisfying $n_1 \ge 5$.  For each $i \in \NN$ denote $n_i = 2r_i + 1$ and $\UP{i} = \UU{r_i}$. Consider the profinite group 
$$A(\mathcal{P}) = \prod_{i \in \NN} \UP{i}$$ 

Let $G = G(\mathcal{P})$ be the countable subgroup of $A(\mathcal{P}) $ generated by the two elements $\tau = (\tau_i)_{i\in\NN}$ and $\sigma = (\sigma_i)_{i\in\NN}$, where   for each $i \in \NN$ we have
$$ \tau_i = \left(-1,0,1\right) \in \UP{i}$$
and 
$$ \sigma_i = \left(-r_i,-r_i+1,\ldots,-1,0,1,\ldots,r_i-1,r_i\right) \in \UP{i}.$$
The group $G(\mathcal{P})$ is the \emph{B.H. Neumann group} associated to the sequence $\mathcal{P}$.

\subsection*{Sets of generators}  For each $r \in \NN$  the following collection of $3$-cycles
\begin{align*}
C_r 
  =\{ (-r, -r+1,-r +2), \ldots, (-1,0,1), \ldots, (r -2, r-1,r)  \}
 \end{align*}
 of size $2r- 1$ is well known to   generate  the   finite alternating group $\UU{r}$.   For each $ i \in \NN$ consider the following generating set  
$$C_i(\mathcal{P}) = C_{r_i} = \{ \tau_i^{\sigma_i^j} \: : \: j \in \setr{r_i - 1} \}$$   for the group $\UP{i}$. 
 In particular $\UP{i}$ is generated by the two elements $\tau_i$ and $\sigma_i$ so that $G(\mathcal{P})$ surjects onto each coordinate of $A(\mathcal{P})$. 
 The following standard fact of elementary group theory  implies that   $G(\mathcal{P})$ is a  dense subgroup of  the profinite group  $A(\mathcal{P})$.

\begin{lemma}
\label{lem:group theory}
Let $\Gamma_1, \ldots, \Gamma_l$ be a family of pairwise non-isomorphic simple groups. If $H$ is a subgroup of the direct product $\Gamma = \prod_{i=1}^l \Gamma_i$ surjecting onto each coordinate  then $H = \Gamma$. 
\end{lemma}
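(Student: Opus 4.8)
The plan is to argue by induction on $l$, in the spirit of Goursat's lemma. The base case $l = 1$ is immediate, since a subgroup of $\Gamma_1$ surjecting onto $\Gamma_1$ is all of $\Gamma_1$. For the inductive step, let $\pi \colon \Gamma \to \Gamma_1 \times \cdots \times \Gamma_{l-1}$ be the projection onto the first $l-1$ coordinates. The image $\pi(H)$ still surjects onto each of $\Gamma_1, \ldots, \Gamma_{l-1}$, so by the inductive hypothesis $\pi(H) = \Gamma_1 \times \cdots \times \Gamma_{l-1}$. It therefore suffices to show that $H$ contains the last factor $M := \{1\} \times \cdots \times \{1\} \times \Gamma_l$; combined with the surjectivity of $\pi|_H$ this forces $H = \Gamma$.

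To this end consider $N := H \cap M = \ker(\pi|_H)$, which is a normal subgroup of $H$, and let $p_l \colon H \to \Gamma_l$ be the last coordinate projection, which is onto by hypothesis. Then $p_l(N)$ is a normal subgroup of $\Gamma_l$, hence by simplicity either trivial or all of $\Gamma_l$. In the latter case, since $p_l$ restricts to an isomorphism on $M$, we conclude $N = M$, so $H \supseteq M$ and we are done. The task thus reduces to ruling out the former possibility $p_l(N) = \{1\}$, equivalently $N = \{1\}$.

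The crux of the argument is to exclude the case $N = \{1\}$, and this is exactly where the hypotheses on the factors are used. If $N$ were trivial, then $\pi|_H$ would be an isomorphism, so $H$ would be the graph of a surjective homomorphism $\phi \colon \Gamma_1 \times \cdots \times \Gamma_{l-1} \to \Gamma_l$. Each factor $\Gamma_i$ with $i < l$ is normal in the product, so $\phi(\Gamma_i)$ is normal in $\Gamma_l$, hence trivial or everything; since the images $\phi(\Gamma_i)$ generate $\phi$'s image, which is all of $\Gamma_l$, at least one satisfies $\phi(\Gamma_i) = \Gamma_l$. But then $\phi|_{\Gamma_i}$ is a surjection from the simple group $\Gamma_i$ whose kernel, being normal in $\Gamma_i$, must be trivial, yielding an isomorphism $\Gamma_i \cong \Gamma_l$ with $i \neq l$. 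This contradicts the pairwise non-isomorphism of the factors. I expect this diagonal (graph) case to be the main obstacle, and it is precisely the point where non-isomorphism is indispensable: without it the diagonal subgroup of $\Gamma_l \times \Gamma_l$ would be a genuine counterexample.
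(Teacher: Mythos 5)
Your proof is correct. The paper states this lemma without proof, as a ``standard fact of elementary group theory,'' so there is no argument to compare against; your Goursat-style induction is precisely the standard one, and you correctly identify the two places where the hypotheses enter --- simplicity of $\Gamma_l$ to force the kernel $N = H \cap M$ to be trivial or all of $M$, and pairwise non-isomorphism to rule out the graph (diagonal) case, which would otherwise furnish a genuine counterexample in $\Gamma_l \times \Gamma_l$.
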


\subsection*{Local finiteness} For each $ i \in\NN$, let  $L_i(\mathcal{P})$ denote the subgroup of $G(\mathcal{P})$ given by
$$ L_i(\mathcal{P}) = \left<  \tau^{\sigma^{j}} \: : \: j \in \setr{r_i-1}  \right>.$$
 The projection of $L_i(\mathcal{P})$ to any coordinate $\UP{j}$ with $ j \le i$  is surjective  for it contains the generating set $C_j(\mathcal{P})$, while if $j > i$ the  image of the  projection to $\UP{j}$ is isomorphic to $\UP{i}$ in a natural way.  In any case, the support of every such projection is contained inside $\setr{r_i}$.

In fact, for each $ i \in \NN$ the projection of the subgroup $L_i(\mathcal{P})$ onto $\prod_{j=1}^i \UP{j}$ is  an isomorphism. It is surjective  according to Lemma \ref{lem:group theory}. To see that this projection is injective, note that every element $g = (g_k)_{k\in\NN}$ belonging to $L_i(\mathcal{P})$ satisfies $g_j = g_i$ for all $ j \ge i$. It follows that  each subgroup  $L_i(\mathcal{P})$ is finite.

The    subgroup $N  = N(\mathcal{P})$ of $G(\mathcal{P})$ given by
$$N(\mathcal{P})   = \bigcup_{i\in \NN} L_i(\mathcal{P}).$$ 
 is locally finite. It is also the normal closure of the element $\tau$ in the group $G(\mathcal{P})$.  Therefore $G/N$ is a cyclic group   generated by the image of   $\sigma$. This is an infinite cyclic group since $\sigma$ is of infinite order and hence no non-trivial power of it can belong to  $N$. Moreover $G$ is the semi-direct product of $N$ and of the cyclic group generated by $\sigma$.

It is easy to detect the elements of $G(\mathcal{P})$ belonging to the subgroup   $N(\mathcal{P})$. An element $g = (g_k)_{k \in \NN} \in G(\mathcal{P})$ belongs to $N(\mathcal{P})$ if and only if there is some $ i \in \NN$ such that the support of $g_k$ is contained in $\setr{i}$ for all $ k \in \NN$. For every element $g \in N$, let $i = i(g) \in \NN$ be the smallest number such that $g \in L_i(\mathcal{P})$. The element $g$ acts \emph{diagonally}  in all coordinates $\UP{j}$ with $ j \ge i(g)$, namely $g_j = g_{i(g)}$ for all such $j$, provided that we identify $\UP{i(g)}$ with a subgroup of $\UP{j}$ in the natural way.
\subsection*{The tail map}

The \emph{tail} (or the \emph{diagonal part}) of the element $g = (g_k)_{k\in\NN} \in N $ is 
$$ t(g) = g_{i(g)} \in \UP{i(g)} = \UU{r_{i(g)}} \subset \UU{\infty}.$$
We consider the tail $t(g)$  as an even permutation of the set $\ZZ$ with finite support by naturally regarding the set $\setr{r_i}$ as a subset of $\ZZ$. It is easy to see that the map $t : N \to \UU{\infty}$ is a homomorphism. Let us summarize.

\begin{prop}
\label{prop:properties of G(P)}
The  group $G = G(\mathcal{P})$ satisfies 
\begin{enumerate}
	\item $G$ is a residually finite group generated by the two elements $\sigma$ and $\tau$,
	\item the normal closure $N = N(\mathcal{P})$ of the element $\tau$ in $G$ is locally finite and   $G =    N \rtimes \left<\sigma\right>$,
	\item $G$ is locally finite-by-cyclic and hence amenable, and
	\item there is a surjective tail homomorphism $t : N \to \UU{\infty}$.
\end{enumerate}
\end{prop}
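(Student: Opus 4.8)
The plan is to verify each of the four items by assembling the structural facts already established in the surrounding text. Each claim has essentially been proved in the preceding discussion, so the proof is mostly a matter of organizing those observations and supplying the one missing link for residual finiteness.

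\medskip

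\noindent\textbf{Items (2) and (4).} I would dispatch these first, since they are the most direct. For item (2), the text already shows that $N = \bigcup_{i} L_i(\mathcal{P})$, that each $L_i(\mathcal{P})$ is finite, and that $N$ is the normal closure of $\tau$; hence $N$ is locally finite (any finitely many elements lie in a common $L_i$). The quotient $G/N$ is infinite cyclic, generated by the image of $\sigma$, because $\sigma$ has infinite order and no nontrivial power of $\sigma$ lies in $N$ (a nontrivial power of $\sigma$ acts as a nontrivial power of a full $n_i$-cycle in each coordinate and so cannot have support confined to a fixed $\setr{i}$). Since $\langle \sigma\rangle \cap N = \{e\}$ and $G = N\langle\sigma\rangle$, the semidirect product decomposition $G = N \rtimes \langle\sigma\rangle$ follows. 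For item (4), the tail map $t: N \to \UU{\infty}$ was defined and noted to be a homomorphism; surjectivity holds because $t(L_i(\mathcal{P})) = \UU{r_i}$ and $\UU{\infty} = \varinjlim_r \UU{r}$, so every element of $\UU\infty$ is hit.

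\medskip

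\noindent\textbf{Item (3).} This is immediate from item (2): $G$ is an extension of the locally finite group $N$ by the cyclic group $\langle\sigma\rangle$, i.e.\ locally finite-by-cyclic. Amenability then follows because both locally finite groups and cyclic groups are amenable, and amenability is preserved under group extensions.

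\medskip

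\noindent\textbf{Item (1).} The generation statement is definitional. The substantive point is residual finiteness, and this is the step I expect to be the main obstacle, since it is the only assertion not already spelled out above. Here I would exploit that $G = G(\mathcal{P})$ is by construction a subgroup of the profinite group $A(\mathcal{P}) = \prod_{i} \UP{i}$. The coordinate projections $\pi_i : A(\mathcal{P}) \to \UP{i}$ are continuous homomorphisms onto finite groups, so their restrictions to $G$ give finite quotients of $G$. Given any nontrivial $g = (g_k)_{k} \in G$, some coordinate $g_i$ is nontrivial, whence $\pi_i(g) \neq e$; thus the finite-index normal subgroups $G \cap \ker\pi_i$ have trivial intersection, proving that $G$ is residually finite. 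The one subtlety worth stating carefully is simply that each $\UP{i}$ is finite and that $g$ nontrivial in $A(\mathcal{P})$ forces at least one nontrivial coordinate, which is just the definition of the product topology/embedding.
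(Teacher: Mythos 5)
Your proof is correct and follows essentially the same route as the paper: items (1)--(3) are assembled from the structural facts already established (the paper likewise just points back to them, with residual finiteness coming from $G$ being a subgroup of the profinite group $A(\mathcal{P})$, which is exactly your coordinate-projection argument), and your surjectivity argument for $t$ via $t(L_i(\mathcal{P})) = \UU{r_i}$ and $\UU{\infty} = \varinjlim_r \UU{r}$ is the same as the paper's.
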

\begin{proof}
Everything was already noticed before, except for the fact that $t$ is surjective. To see this, consider some element $h \in \UU{\infty}$. The support of $h$ is contained in $\setr{r_i}$ for some $i \in \NN$ sufficiently large. Then $h = t(g)$ for any element $g \in L_i(\mathcal{P})$ whose projection on the $\UP{i}$ coordinate coincides with $h$.
\end{proof}

We would like to  extend the tail map $t$ to the entire group $G(\mathcal{P})$. To this end it is desirable to replace the range   of   $t$ by a larger group.
Let $\bar{\sigma}$ be the permutation of the set $\ZZ$ given by $\bar{\sigma}(x) = x+ 1$ for all integers $x \in \ZZ$. Let $V$ denote the following subgroup of $\mathrm{Sym}(\ZZ)$
$$ V = \UU{\infty} \rtimes \left<\bar{\sigma}\right>. $$

\begin{prop}
\label{prop:tail map extends}
Consider the group $G = G(\mathcal{P})$.
\begin{enumerate}
	\item 
		\label{it:tail extends}
		The tail map $t$ naturally extends to a surjective   homomorphism 
	$ \widetilde{t} : G \to V$.

	\item Let $ \U(\mathcal{P})$ denote the kernel $\ker \widetilde{t} = \ker t$, i.e the subgroup of $N$ consisting of all elements with trivial tail. Then $\U(\mathcal{P}) = \oplus_{i\in\NN} \UP{i}$.
	\label{it:kernel of tail}
\end{enumerate}
\end{prop}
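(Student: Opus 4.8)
The plan is to build $\widetilde{t}$ from the semidirect product decomposition $G = N \rtimes \langle\sigma\rangle$ recorded in Proposition \ref{prop:properties of G(P)}. Since $\langle\sigma\rangle$ is infinite cyclic, the universal property of the semidirect product reduces the construction of a homomorphism $\widetilde{t} : G \to V$ restricting to $t$ on $N$ to two things: prescribing the image of $\sigma$, which I take to be $\bar{\sigma}$, and verifying the single compatibility condition
$$ t(\sigma n \sigma^{-1}) = \bar{\sigma} \, t(n) \, \bar{\sigma}^{-1} \qquad \text{for all } n \in N. $$
Once this holds, $\widetilde{t}$ is a well-defined homomorphism with $\widetilde{t}|_N = t$ and $\widetilde{t}(\sigma) = \bar{\sigma}$, and its image contains both $t(N) = \UU{\infty}$ (surjectivity of $t$ being part of Proposition \ref{prop:properties of G(P)}) and $\bar{\sigma}$, hence all of $V = \UU{\infty} \rtimes \langle\bar{\sigma}\rangle$. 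This yields statement (\ref{it:tail extends}).

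The heart of the matter, and the step I expect to require the most care, is the compatibility identity, which is where the explicit form of the cycles $\sigma_i$ enters. Fixing $n \in N$ and writing $p = t(n)$, a finitely supported permutation of $\ZZ$ with $\mathrm{supp}(p) \subseteq \setr{s}$ for some $s$, the tail of $\sigma n \sigma^{-1}$ is the eventual coordinate value $\sigma_k n_k \sigma_k^{-1}$, which for $k \ge i(n)$ equals $\sigma_k p \sigma_k^{-1}$ since $n_k = p$ there. The subtlety is that $\sigma_k$ is the full cycle on $\setr{r_k}$ and so differs from the genuine shift $\bar{\sigma}$ at the single wrap-around point $r_k \mapsto -r_k$. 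The observation that resolves this is that once $k$ is large enough that $r_k > s+1$, both $\mathrm{supp}(p)$ and its shift stay well inside $\setr{r_k}$, away from the wrap-around; a direct check on points then gives $\sigma_k p \sigma_k^{-1} = \bar{\sigma} p \bar{\sigma}^{-1}$ in $\UU{\infty} \subset \mathrm{Sym}(\ZZ)$, which is exactly the required identity.

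It remains to identify the kernel. Since $V = \UU{\infty} \rtimes \langle\bar{\sigma}\rangle$ with $\bar{\sigma}$ of infinite order, writing a general element of $G$ as $n\sigma^k$ gives $\widetilde{t}(n\sigma^k) = t(n)\bar{\sigma}^k$, which is trivial only when $k = 0$ and then only when $t(n) = e$; hence $\ker\widetilde{t} = \ker t$. To prove $\ker t = \oplus_{i}\UP{i}$ I would argue by two inclusions. For $\ker t \subseteq \oplus_i \UP{i}$: if $n \in N$ has $t(n) = e$ then, because $n$ acts diagonally in all coordinates $j \ge i(n)$ with $n_j = t(n) = e$, only the coordinates $1,\ldots,i(n)-1$ can be nontrivial, so $n$ lies in the direct sum. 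For the reverse inclusion I would first check that each single-coordinate subgroup $\UP{i}$ lies in $N$: an element supported only on coordinate $i$ with value $a$ is realized as $h(h')^{-1}$, where $h \in L_i(\mathcal{P})$ equals $a$ in all coordinates $\ge i$ and $h' \in L_{i+1}(\mathcal{P})$ equals $a$ in all coordinates $\ge i+1$, so that their diagonal tails cancel and only coordinate $i$ survives. Consequently $\oplus_i \UP{i} \subseteq N$, and since each such element has finite support its eventual coordinate value, i.e. its tail, is trivial, giving $\oplus_i \UP{i} \subseteq \ker t$. Combining the two inclusions yields statement (\ref{it:kernel of tail}).
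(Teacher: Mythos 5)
Your argument is correct and follows essentially the same route as the paper: extend $t$ over the semidirect product $G = N \rtimes \langle\sigma\rangle$ by sending $\sigma \mapsto \bar{\sigma}$ (you verify the conjugation compatibility, including the wrap-around point of the cycles $\sigma_k$, which the paper leaves as ``clear from the definitions''), and identify the kernel via the isomorphism $L_i(\mathcal{P}) \cong \prod_{j \le i} \UP{j}$ together with the diagonal behaviour of the coordinates beyond $i$. The one point to tighten is that when realizing a single-coordinate element as $h(h')^{-1}$ you must also choose $h$ and $h'$ to be trivial in the coordinates below $i$ (which is possible precisely because those projections are isomorphisms); as stated, arbitrary $h$ and $h'$ with the prescribed tails could leave nontrivial lower coordinates in the product.
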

\begin{proof}
 Recall that $G = N \rtimes \left<\sigma\right>$. It clear from the definitions  that the map $\widetilde{t} : G \to V$ given by $\widetilde{t}(\sigma^k) = \bar{\sigma}^k$ and $\widetilde{t}(g) = t(g)$ for every element $g \in N$ is a surjective homomorphism. Statement   (\ref{it:tail extends}) follows.
 
To see Statement (\ref{it:kernel of tail}) we consider the group  $L_i(\mathcal{P})$ as defined above. We know that the projection of $L_i(\mathcal{P})$ on the coordinates $\prod_{j=1}^i \UP{j}$ is an isomorphism.  The tail $t(g)$ of every element $g = (g_k)_{k\in\NN} \in L_i(\mathcal{P})$ is equal to $g_i$.  
 It follows that the kernel of the projection of $L_i(\mathcal{P})$ to the coordinate $\UP{i}$ is equal to $\oplus_{j =1}^{i-1} \UP{j}$. Call this last group $R_i$ so that $R_i =  L_i(\mathcal{P}) \cap \ker t$. Now we have $N = \bigcup_{i=1}^\infty L_i(\mathcal{P})$ and so $\ker t = \U(\mathcal{P}) = \bigcup_i R_i$. It follows that $\U(\mathcal{P}) = \oplus_{i=1}^\infty \UP{i}$.
 \end{proof}

The structure of the group $\U(\mathcal{P})$ clearly   depends on the sequence $\mathcal{P}$, while  the   group $V = \UU{\infty}  \rtimes \left<\bar{\sigma} \right>$ is independent of  $\mathcal{P}$. As $\UU{\infty}$ and $\U(\mathcal{P})$ are both perfect groups, the derived subgroup of $G(\mathcal{P})$ is   $ \left[G(\mathcal{P}),G(\mathcal{P})\right] = N(\mathcal{P})$. The subgroup $\U(\mathcal{P})$ is generated by all the finite normal subgroups of $N(\mathcal{P})$ and is therefore characteristic.
 It follows that any isomorphism from $G(\mathcal{P})$ to $G(\mathcal{P}')$ would necessary take $\U(\mathcal{P})$ to $\U(\mathcal{P}')$. The groups  $\U(\mathcal{P})$ to $\U(\mathcal{P}')$  are isomorphic if and only if $\mathcal{P} = \mathcal{P}'$. This reproves B.H. Neumann's result saying that the groups $G(\mathcal{P})$ with different   $\mathcal{P}$'s are non-isomorphic, so that   there are uncountably many such $2$-generated groups.

The following commutative diagram depicts   three short exact sequences involving the B.H. Neumann group $G(\mathcal{P})$. The diagonal sequence is  the  abelianization map.
$$
 \begin{tikzcd}
1 \arrow[rd] & & & 1 \arrow[d] \\
& N(\mathcal{P}) \arrow[rd] \arrow[rr,"t"] & & \UU{\infty} \arrow[d] \arrow[r] & 1 \\
1 \arrow[r] &  \U(\mathcal{P}) \arrow[u,hook] \arrow[r] & G(\mathcal{P}) \arrow[rd, "\text{Ab}"] \arrow[r, "\widetilde{t}"] & V \arrow[d] \arrow[r] & 1 \\
& & & \left<\bar{\sigma}\right> \arrow[d] \arrow[rd] \\
& & & 1 & 1
 \end{tikzcd}
$$

\begin{remark}
\label{rmk:finite seq}
Consider the situation where $\mathcal{P}'$ is an infinite subsequence of $\mathcal{P}$.   The kernel of the natural map $G(\mathcal{P}) \to G(\mathcal{P}')$, obtained by deleting the coordinates of $\mathcal{P}$ which are not in $\mathcal{P}'$, is exactly 
$$\ker(G(\mathcal{P}) \to G(\mathcal{P}')) = \bigoplus_{\substack{i \in \NN\\ n_i \notin \mathcal{P}'}}  \UP{i} \le \U(\mathcal{P}).$$

The group  $G(\mathcal{P}')$ can be  formally defined   in exactly the same way also when the sequence $\mathcal{P}'$ is  finite. In that case the group $G(\mathcal{P}')$ is finite, and   unlike what happens when $\mathcal{P}'$ is infinite, the kernel of the natural map   $G(\mathcal{P}) \to G(\mathcal{P'})$ has finite index in $G(\mathcal{P})$. For a discussion of the finite index subgroups of $G(\mathcal{P})$ see \cite{lubotzky1996discrete}.
\end{remark}

\subsection*{On finite index subgroups}
We  give a pure group-theoretic property of the B.H. Neumann group $G = G(\mathcal{P})$ that will be  of crucial importance when we    classify all invariant random subgroups of $G$ in   \S\ref{sec:external case}.

%Recall the extended tail homomorphism $\widetilde{t} : G \to V = \UU{\infty} \rtimes \left<\bar{\sigma}\right>$ defined in Proposition \ref{prop:tail map extends}. In particular $\ker \widetilde{t} = \U(\mathcal{P})$.

\begin{prop}
\label{prop:large image implies finite index}
If $H$ is a subgroup of $G$ with $\left[V:\widetilde{t}(H)\right] < \infty$ then $\left[G:H\right] < \infty$.
\end{prop}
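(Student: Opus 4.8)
The plan is to turn the hypothesis, which concerns the quotient $V = G/\U(\mathcal{P})$, into a statement purely about the kernel $\U(\mathcal{P}) = \ker\widetilde{t} = \bigoplus_i \UP{i}$, and then to recover the factors $\UP{i}$ one at a time. First I would use multiplicativity of the index. Since $\widetilde{t}$ is onto $V$ with $\U(\mathcal{P}) \le H\,\U(\mathcal{P})$, we get $\left[G : H\,\U(\mathcal{P})\right] = \left[V : \widetilde{t}(H)\right] < \infty$, so it is enough to bound $\left[H\,\U(\mathcal{P}) : H\right] = \left[\U(\mathcal{P}) : H \cap \U(\mathcal{P})\right]$, the last equality being the index form of the second isomorphism theorem. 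Thus the proposition is equivalent to $\left[\U(\mathcal{P}) : K\right] < \infty$ where $K := H \cap \U(\mathcal{P})$. Two consequences of the hypothesis should be recorded at once. Because $\UU{\infty}$ is infinite and simple it has no proper finite-index subgroup, and the injection $\UU{\infty}/(\widetilde{t}(H)\cap\UU{\infty}) \hookrightarrow V/\widetilde{t}(H)$ forces $\UU{\infty} \subseteq \widetilde{t}(H)$; hence $\widetilde{t}(H) = \UU{\infty} \rtimes \langle \bar{\sigma}^{m}\rangle$ for some $m \ge 1$. In particular $H \cap N(\mathcal{P})$ surjects onto $\UU{\infty}$ via the tail $t$ with kernel exactly $K$, and there is an element $\gamma = \sigma^{m} n \in H$ with $n \in \U(\mathcal{P})$, lifting $\bar{\sigma}^{m}$.

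Next I would reformulate the target coordinate by coordinate: writing $\pi_i \colon A(\mathcal{P}) \to \UP{i}$ for the projection, it suffices to show $\UP{i} \subseteq K$ for all but finitely many $i$, since then $K \supseteq \bigoplus_{i>n}\UP{i}$ and $\left[\U(\mathcal{P}):K\right] \le \prod_{i\le n}\lvert\UP{i}\rvert < \infty$. Fix a large $i$. Since $\UP{i}$ is simple and $K \cap \UP{i}$ is normalised by $\pi_i(H)$ (because $K \trianglelefteq H$ and $\UP{i} \trianglelefteq A(\mathcal{P})$, and this conjugation is conjugation by $\pi_i(h)$), it is enough to prove $\pi_i(K) = \UP{i}$ and $K \cap \UP{i} \neq 1$: the first makes $K\cap\UP{i}$ a normal subgroup of $\UP{i}$ and the second upgrades it to all of $\UP{i}$. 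The structural feature to emphasise is that every $g \in G$ normalises each factor $\UP{i}$ and acts on it by the inner automorphism $\pi_i(g)$, so the factors are never permuted and the only coupling between distinct coordinates is the eventually-diagonal shape of elements of $N(\mathcal{P})$, namely $g_j = t(g)$ for all $j \ge i(g)$.

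The heart of the proof, and the step I expect to be the main obstacle, is producing \emph{localised} elements of $K$, that is $K\cap\UP{i}\neq 1$. The tail coupling alone does not suffice: conjugation by $t(h)\in\UU{\infty}$ is compatible with the natural inclusions $\UP{i}\hookrightarrow\UP{j}$, so $H\cap N(\mathcal{P})$ would be consistent with a \enquote{diagonally slaved} $K$, whose $i$-th coordinate is tied to a higher one through $\UP{i}\hookrightarrow\UP{j}$ and which would have infinite index. What destroys this possibility is precisely $\gamma$: as $K$ is $\gamma$-invariant and $\gamma$ acts on $\UP{i}$ (for $i$ beyond the support of $n$) by conjugation by $\sigma_i^{m}$, the wrap-around of the $n_i$-cycle $\sigma_i$ \emph{depends on the length $n_i$}, which differs from coordinate to coordinate. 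Concretely, conjugation by $\sigma_i^{m}$ shifts a support cyclically inside $\setr{r_i}$, whereas conjugation by $\bar{\sigma}^{m}$ shifts the tail rigidly inside $\ZZ$, and these disagree once a support is pushed past $\pm r_i$; the pairwise distinct cycle lengths make this discrepancy unavoidable. I would exploit it as follows: conjugating a fixed lift $h_0\in H$ of $(-1,0,1)$ by $\gamma^{p}$ gives elements of $H$ whose tail is the far shifted $3$-cycle $(mp-1,mp,mp+1)$ but whose value at coordinate $i$ is the \emph{cyclically} shifted $3$-cycle; choosing $p$ so that $mp$ has a prescribed small nonzero residue modulo $n_i$, the commutator $\left[\gamma^{p}h_0\gamma^{-p},\,h_0\right]$ has trivial tail (the two tails are disjoint, hence commute) and is nontrivial at coordinate $i$, so it lies in $K$ and is nontrivial there.

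Finally I would pass from \enquote{nontrivial at $i$} to \enquote{supported only at $i$} by a support-minimisation argument: taking $\kappa\in K$ of minimal support containing $i$, replacing $\kappa$ by $\kappa^{-1}(\theta\kappa\theta^{-1})$ for a suitable $\theta\in\{\gamma\}\cup(H\cap N(\mathcal{P}))$ removes another coordinate from the support while retaining $i$, the separation being available because the conjugations by $\sigma_j^{m}$ have different periods $n_j/\gcd(n_j,m)$ on the finitely many factors in play, so a suitable power returns one coordinate to itself while moving another. This yields $K\cap\UP{i}\neq 1$, and a parallel but easier use of the tail surjectivity together with Lemma \ref{lem:group theory}, applied to finite blocks of the pairwise non-isomorphic simple factors, gives $\pi_i(K)=\UP{i}$; simplicity of $\UP{i}$ then forces $\UP{i}\subseteq K$ for all large $i$, which completes the proof. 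The delicate point throughout is the bookkeeping of the cyclic wrap-around, and making the \enquote{separate one coordinate from another} step uniform in the arbitrary increasing sequence $(n_i)$ is where the real work lies; it is also the only place where the specific choice of the $\sigma_i$ as long cycles of strictly increasing odd lengths is used.
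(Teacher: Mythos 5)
Your opening reduction (to bounding $[\U(\mathcal{P}):H\cap\U(\mathcal{P})]$) and your observation that $H$ surjects onto $\UP{i}$ for all large $i$ agree with the paper's Step 1, but the heart of your argument --- manufacturing nontrivial elements of $K\cap\UP{i}$ from the cyclic wrap-around --- has two genuine gaps. First, the residues $mp \bmod n_i$ realizable by powers of $\gamma$ form the subgroup of $\ZZ/n_i\ZZ$ generated by $d=\gcd(m,n_i)$, so the smallest achievable nonzero residue is $d$; your commutator $\left[\gamma^{p}h_0\gamma^{-p},h_0\right]$ is nontrivial at coordinate $i$ only when the wrapped $3$-cycle $(d-1,d,d+1)$ meets $(-1,0,1)$ without coinciding with it, i.e.\ only when $d\le 2$, hence (as $n_i$ is odd) only when $\gcd(m,n_i)=1$. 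If every $n_i$ is divisible by $3$ and $m=3$, the commutator is trivial at every coordinate and the construction produces nothing. This is repairable --- $d$ divides $m$, so there are finitely many possible values, and one can pre-choose lifts $h^{(d)}\in H$ of $3$-cycles overlapping position $d$ --- but as written the step fails. Second, the passage from \enquote{some $\kappa\in K$ nontrivial at coordinate $i$} to \enquote{$K\cap\UP{i}\ne 1$} is exactly where you concede the real work lies, and the mechanism you propose (distinct periods $n_j/\gcd(n_j,m)$) can fail, for instance when $n_i$ divides $\mathrm{lcm}(n_j,m)$, while conjugation by elements of $H\cap N$ acts by the \emph{same} tail permutation on all coordinates beyond the diagonal range of the available lifts and so cannot separate them either. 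A clean way to close this second gap inside your own framework: the normal closure $M=\left<\kappa^H\right>$ lies in $K\cap\prod_{l\in S}\UP{l}$ over the finite support $S$ of $\kappa$ and surjects onto each large factor, so Lemma \ref{lem:group theory} gives the whole product.

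The paper avoids both difficulties by never localizing at all. After arranging (your Step 1 plus a Jordan--H\"older argument) that $H$ is trivial on the first few coordinates and onto all the rest, it uses Lemma \ref{lem:group theory} to show that $K=H\cap\U(\mathcal{P})$ is normal in all of $\U(\mathcal{P})$, whence $K=\oplus_{i\in I}\UP{i}$ for some subset $I$ with no diagonal coupling possible; it then rules out an infinite complement $\mathcal{P}''=\mathcal{P}\setminus I$ by a soft contradiction: $H/K\cong\widetilde{t}(H)$ would embed in the residually finite group $G(\mathcal{P}'')$, yet a finite-index subgroup of $V$ contains the infinite simple group $\UU{\infty}$ and is not residually finite. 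That residual-finiteness argument is where cofiniteness is actually extracted, and it replaces all of the coordinate-by-coordinate bookkeeping your proposal sets up.
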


\begin{proof}
Let $H$ be a subgroup $G$ and assume that $\widetilde{t}(H)$ has finite index in $V$. The subgroup $\UU{\infty}$ of $V$ is simple and in particular has no finite index subgroups. Therefore $\widetilde{t}(H) = \UU{\infty} \rtimes \left<\bar{\sigma}^k\right>$ for some $k \in \NN$. 
The proof  will proceed in five steps.

\emph{Step 1. There exists some $i_H \in \NN$ such that for all $i > i_H$ the projection of the subgroup   $H$ to the coordinate $\UP{i}$  is surjective.}

Indeed, the image $\widetilde{t}(H)$ of $H$ contains the  subgroup $\UU{\infty}$. This means that the subgroup  $H$ contains  for each $ j \in \ZZ$ an element $h_j$ belonging to $N$ whose tail satisfies $\widetilde{t}(h_j) = t(h_j) = t(\tau^{\sigma^j})$. Moreover, as $\bar{\sigma}^k \in \widetilde{t}(H)$ for some $k \in \NN$, the subgroup $H$ contains an element $h$ satisfying $t(h) = \bar{\sigma}^k$.

As the tails of $h_j$ and $\tau^{\sigma^j}$ are the same,
for each $ j \in \setr{k-1}$  there exists an element $u_j \in \U(\mathcal{P})$ with $\tau^{\sigma^{j}} = u_j h_j$. Similarly there exists an element $u \in \U(\mathcal{P})$ so that   $h = u \sigma^k$.
Choose   $i_H \in \NN$ to be sufficiently large such that the $u_j$'s as well as $u$ are all contained in $\oplus_{i=1}^{i_H} \UP{i}$. 

 For every index $ i > i_H$ the projection of each element $h_j$ to the coordinate $\UP{i}$ is equal to $\tau_i^{\sigma_i^j}$, while the projection of the element $h$ is equal to $\sigma_i^k$. We claim that this projection contains the full generating set $C_i(\mathcal{P})$ for the group   $\UP{i}$ of  all alternating symmetries of $\setr{r_i}$. Indeed, this projection contains $\tau_i^{\sigma_i^l}$ for every $l \in \ZZ$, for if $l = ak+b$ for some $a,b \in \ZZ$ with $ 0 \le b < k$ then
$$\tau_i^{\sigma_i^l} = \left(\tau_i^{\sigma_i^b}\right)^{({\sigma_i^{k}})^a} \in C_i(\mathcal{P}).$$
We conclude that  for all $i > i_H$ the projection of the subgroup $H$ to the coordinate $\UP{i}$ is surjective.

\vspace{5pt}
\emph{Step 2. The kernel $H_0$ of the projection from $H$ to $\prod_{i=1}^{ i_H} \UP{i}$ has finite index in $H$. For all $i > i_H$ the projection of $H_0$  to the coordinate $\UP{i}$  is still surjective.}

For   $ i > i_H$, let $M_i$ be the kernel  of the projection from $H$ to the coordinate $\UP{i}$. Then $M_i$ it is a normal subgroup of $H$ with $H/M_i \cong \UP{i}$, while $H_0$ is a normal subgroup of $H$ such that all of the Jordan--Holder components of $H/H_0$ are simple groups of order less than $|\UP{i}|$. It follows that $H_0 M_i = H$ so that $H_0$ is mapped onto the coordinate $\UP{i}$ as required. 

We can therefore replace $H$ by $H_0$ and assume from now on that our subgroup $H$ is mapped trivially on the first $i_H$ coordinates and onto all the others.
\vspace{5pt}

\emph{Step 3. The subgroup   $K = H \cap \U(\mathcal{P})$ is  normal in $\U(\mathcal{P})$.}

 To this end, let $ x \in K$ and $y \in \U(\mathcal{P})$. Note that the first $i_H$-many coordinates of $x$ are trivial by our assumption on $H$. Since $x$ is also in $\U(\mathcal{P})$, there is some $i_x \in \NN$ so that for every $ i > i_x$ the coordinate of $x$ in $\UP{i}$ is trivial. 
 
 We claim that there exists an element $h \in H$ whose coordinates in $\UP{i}$ for all $i_H < i \le i_x$   are equal to those of the element $y$. This follows from Step 2. Indeed, the subgroup    $H$ is mapped onto each of the coordinates $\UP{i_H+1}, \ldots, \UP{i_x}$. Since these are different   simple groups $H$ is also mapped  onto their product, by Lemma \ref{lem:group theory}. Hence such an element $h$ does exist. 
 
Now, for this element $h$, we have that $x^y = x^h \in K^h$. Since $\U(\mathcal{P}) \nrm G(\mathcal{P})$ clearly   $K \nrm H$. Therefore  $x^y \in K$. We conclude that $K$ is normal in $\U(\mathcal{P})$.
\vspace{5pt}

\emph{Step 4. $K = \oplus_{i \in \NN} \U_{i}(\mathcal{P}')$ for some subsequence $\mathcal{P'}$ of $\mathcal{P}$.}

This follows from $H \cap \U(\mathcal{P}) \nrm \U(\mathcal{P})$ and $\U = \oplus_{i \in \NN} \UP{i}$.

\vspace{5pt}
\emph{Step 5.  The subsequence $\mathcal{P}'$ is co-finite in $\mathcal{P}$.}

Once we prove this, it follows that $H \cap \U(\mathcal{P})$ is of finite index in $\U(\mathcal{P})$ and altogether $H$ is of finite index in $G$.

Assume towards contradition that the complementary sequence $\mathcal{P}'' = \mathcal{P} \setminus \mathcal{P}'$ is infinite. Consider the map $G(\mathcal{P}) \to G(\mathcal{P}'')$ obtained by deleting all the coordinates $\UP{i}$ which are contained in the subgroup $H$, see Remark \ref{rmk:finite seq}. The image of $H$ in the group $G(\mathcal{P}'')$ via this map is isomorphic to 
$$Q =  H/(\oplus_{i \in \NN} \U_{i}(\mathcal{P}')) = H/K =  H / (H\cap \U(\mathcal{P})).$$ 
This means  that $Q$ is at the same time isomorphic to $\widetilde{t}(H)$, which is a finite index subgroup of $V$. This is impossible as  $G(\mathcal{P}'')$ is residually finite while $V$ is not.  Hence $\mathcal{P}''$ is finite. This completes the proof.
\end{proof}

\subsection*{Folner sets} We end Section  \ref{sec:Neumann groups} by constructing an explicit Folner sequence in the B.H. Neumann group $G(\mathcal{P})$.

\begin{prop}
\label{prop:Folner sets}
Denote $L_i = L_i(\mathcal{P})$. The subsets 
$$F_i = F_i(\mathcal{P}) =  \{\sigma^{j} l \: : \: j \in \setr{r_i-1}, l \in L_i \}$$
 form a Folner sequence in the group $G(\mathcal{P})$.
\end{prop}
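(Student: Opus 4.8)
The plan is to verify the left Følner condition directly, exploiting the coset decomposition $F_i = \bigsqcup_{j \in \setr{r_i-1}} \sigma^j L_i$ of each $F_i$ into left cosets of the finite subgroup $L_i$. Since $G$ is generated by the two elements $\sigma$ and $\tau$, I would first reduce to these generators: using the standard subadditivity $\abs{gh F_i \triangle F_i} \le \abs{g F_i \triangle F_i} + \abs{h F_i \triangle F_i}$ together with $\abs{g^{-1} F_i \triangle F_i} = \abs{g F_i \triangle F_i}$, it suffices to prove that $\abs{\sigma F_i \triangle F_i}/\abs{F_i} \to 0$ and $\abs{\tau F_i \triangle F_i}/\abs{F_i} \to 0$ as $i \to \infty$.

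The first step is to record the size $\abs{F_i} = (2r_i - 1)\,\abs{L_i}$. This follows because the semidirect decomposition $G = N \rtimes \left<\sigma\right>$ forces the cosets $\sigma^j L_i$, for $j$ ranging over the $2r_i-1$ integers of $\setr{r_i-1}$ and $L_i \le N$, to be pairwise disjoint: an equality $\sigma^j l = \sigma^{j'} l'$ would give $\sigma^{j-j'} \in N$, hence $j = j'$ since $\sigma$ has infinite order modulo $N$, and then $l = l'$.

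The key computation is the \emph{exact} $\tau$-invariance $\tau F_i = F_i$. For each $j \in \setr{r_i-1}$ I would write $\tau \sigma^j = \sigma^j \tau^{\sigma^j}$, where $\tau^{\sigma^j} \in L_i$ by the very definition $L_i = \left< \tau^{\sigma^j} \: : \: j \in \setr{r_i-1} \right>$; hence $\tau \sigma^j L_i = \sigma^j L_i$ for every such $j$, and taking the union over the cosets yields $\tau F_i = F_i$, so that $\abs{\tau F_i \triangle F_i} = 0$. For $\sigma$, left multiplication merely shifts the index interval by one: $\sigma F_i = \bigsqcup_{j \in \setr{r_i-1}} \sigma^{j+1} L_i$ is the union of the cosets $\sigma^{j'} L_i$ over $j' \in \{-r_i+2, \ldots, r_i\}$, whereas $F_i$ uses $j' \in \{-r_i+1, \ldots, r_i-1\}$. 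Therefore $\sigma F_i \triangle F_i$ consists of exactly the two boundary cosets $\sigma^{-r_i+1} L_i$ and $\sigma^{r_i} L_i$, so $\abs{\sigma F_i \triangle F_i} = 2\abs{L_i}$ and $\abs{\sigma F_i \triangle F_i}/\abs{F_i} = 2/(2r_i-1) \to 0$, using that $r_i \to \infty$ since the sequence $\mathcal{P}$ is unbounded.

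There is no serious obstacle here; once the coset picture is in place the statement is a clean verification, structurally identical to the fact that intervals form a Følner sequence in $\ZZ$, thickened by the growing finite groups $L_i$. The one point requiring care is the exact $\tau$-invariance: it hinges on the index set $\setr{r_i-1}$ defining $F_i$ being \emph{identical} to the index set of the generators defining $L_i$, which is precisely what guarantees $\tau^{\sigma^j} \in L_i$ for every coset occurring in $F_i$. Combining the two generator estimates with the reduction step then completes the proof.
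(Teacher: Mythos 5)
Your proof is correct and follows essentially the same route as the paper: the exact identity $\tau F_i = F_i$ via $\tau\sigma^j = \sigma^j\tau^{\sigma^j}$ with $\tau^{\sigma^j} \in L_i$, and the two boundary cosets giving $\abs{\sigma F_i \triangle F_i}/\abs{F_i} = 2/(2r_i-1)$. The extra details you supply (disjointness of the cosets from $G = N \rtimes \left<\sigma\right>$, and the subadditivity reduction to generators) are correct and left implicit in the paper.
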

 \begin{proof}
To see that the subsets $F_i$ form a Folner sequence it will suffice to verify that 
both quantities $|\sigma F_i \triangle F_i|$ and $|\tau F_i \triangle F_i|$ are $o(|F_i|)$. In the   case of the generator $\sigma$ we have that
$$ \frac{|\sigma F_i \triangle F_i|}{|F_i|} = \frac{2|L_i|}{(2r_i-1) | L_i|} = \frac{2}{2r_i-1}$$
as required. In the   case of the generator $\tau$, note that
$$ \tau F_i = \tau \{ \sigma^{j}  \: : \: j \in \setr{r_i-1} \} L_i =  \{\sigma^{j} \tau^{\sigma^j}   \: : \: j \in \setr{r_i-1}  \} L_i = F_i$$
as $\tau^{\sigma^j}  \in L_i$ for all   $j \in \setr{r_i-1}$. In particular $|\tau F_i \triangle F_i| = 0$ for all $i \in \NN$. This concludes the proof.
\end{proof}

\section{Invariant random subgroups and Weiss approximation}
\label{sec:Weiss approx}

We first 
recall  the notion of invariant random subgroups. We then discuss    Weiss approximations introduced in \cite{levit2019infinitely}. Those approximations   can be used in conjunction with the pointwise ergodic theorem for amenable groups to show that certain invariant random subgroups are co-sofic.
\subsection*{Invariant random subgroups}

We recall the basic theory of invariant random subgroups, mainly in order to set the notations. For proofs and more details --- see \cite{abert2014kesten} as well as \cite{becker2019stability, levit2019infinitely}.

%\subsection*{Space of subsets}

Let $G$ be a countable discrete group. 
Consider the \emph{power set}  
$ \Pow{G} = \{0,1\}^\Gamma$
equipped with the Tychonoff product topology. The space $\Pow{G}$ is compact.
The group $G$ acts on its power set  $\Pow{G}$ by homeomorphisms via conjugation. We denote this action by $c_g$, so that given an element $g \in G$ and a subset $A \subset G$ we have
$$c_g A = A^{g^{-1}} =   gAg^{-1}. $$

Consider the following subset of $\Pow{G}$
$$\Sub{G} = \{H \le G \: : \: \text{$H$ is a subgroup of $G$} \}.$$ 
The space $\Sub{G}$ is called the \emph{Chabauty space} of the group $G$. It is a closed subset of $\Pow{G}$ and hence is compact. It is clear that  $\Sub{G}$ is preserved by the conjugation action $c_g$ of $G$.
Let $\Subfi{G}$ denote the subset of $\Sub{G}$ consisting of all finite index subgroups.
%\subsection*{Spaces of probability measures} 

Let $\Probs{G}$ be the space of all Borel probability measures on the   set $\Sub{G}$. This space is  weak-$*$ compact   according to the Banach--Alaoglu theorem. The  conjugation action  of $G$ on its Chabauty space   $\Sub{G}$ gives rise to a corresponding push-forward action  of $G$ on the space $\Probs{G}$. We continue using the notation  $c_g$ for this push-forward action.

Denote
$$ \IRS{G} = \{ \mu \in \Probs{G} \: : \: \text{ $c_g \mu = \mu$ for all $g \in G$} \}.$$
Note that $\IRS{G}$ is a weak-$*$ closed and hence a compact subset of $\Probs{G}$. An element $\mu \in \IRS{G}$ is called an \emph{invariant random subgroup}.  	Let $\IRSfi{G}$ denote the subspace consisting of all $\mu \in \IRS{G}$ satisfying $\mu(\Subfi{G}) = 1$.

\begin{definition}
	\label{def:cosofic IRS}

	The invariant random subgroup $\mu \in \IRS{G}$ is \emph{co-sofic} if 
	$$\mu \in \overline{\IRSfi{G}}^{\text{weak-$*$}}.$$
\end{definition}

 The   space $\IRS{G}$ is a \emph{Choquet simplex} \cite[\S12]{phelps2001lectures}. This means that every invariant random subgroup $\mu \in \IRS{G}$ can be decomposed  as a convex combination of ergodic invariant random subgroups in a unique way. 

\subsection*{Chabauty spaces of subgroups and quotients}

Let $H$ be any subgroup of $G$. There is a continuous \emph{restriction map}  given by
$$ \cdot_{|H} : \Sub{G} \to \Sub{H}, \quad L \mapsto L_{|H} = L \cap H \quad \forall L \le G. $$
 The restriction map is $H$-equivariant for the conjugation action   of the subgroup $H$.
Pushing-forward via the restriction determines a map
$$ \cdot_{|H} : \IRS{G} \to \IRS{H}, \quad \mu \mapsto \mu_{|H} \quad \forall \mu \in \IRS{G}.$$

Let $Q$ be a quotient   of $G$ admitting   a surjective homomorphism $\pi : G \to Q$. There is a corresponding map $\pi : \Sub{G} \to \Sub{Q}$ of subgroups taking every subgroup $H \le G$ to its image $\pi(H)$ in $Q$.	The map $\pi : \Sub{G} \to \Sub{Q}$  is $G$-equivariant and Borel measurable. We obtain a push-forward map
$$ \pi_* : \IRS{G} \to \IRS{Q}, \quad \mu \mapsto \pi_*\mu \in \IRS{Q} \quad \forall \mu \in \IRS{G}.$$

\subsection*{Weiss approximable subgroups}
\label{sub:weiss approximable subgroups}

%We introduce a notion of Weiss approximable subgroups and relate this to co-soficity of invariant random subgroups. Weiss approximation is inspired by Benjamin Weiss' work \cite{weiss2001monotilable}.
%Our arguments   in this section crucially rely on the pointwise ergodic theorem for amenable groups due to Lindenstrauss \cite{lindenstrauss2001pointwise}. 

%\subsection*{Transversals and finite index subgroups}

Let $G$ be a   discrete  group and $H$ be a fixed subgroup of $G$.  
 	A \emph{transversal} for $H$ in $G$ is a subset $T_0$ consisting of one element from each   coset $tH$ of $H$ in $G$. A \emph{finite-to-one   transveral} $T$ for $H$ in $G$ is a disjoint union of finitely many   transversals for $H$ in $G$.
That is to say,    there is some $ k\in \NN$ so that $T$ consists of   exactly $k$ elements from each   coset of $H$.

%Let $H$ be a subgroup of the discrete group $G$.

\begin{definition}
	\label{def:cosofic subgroup}
	
	The subgroup $H$ is \emph{Weiss approximable} in $G$ if there are finite index subgroups $K_i$ of $G$ with finite-to-one transversals $F_i$ to $\mathrm{N}_G(K_i)$ such that
	$$ d_{\Probs{G}}\left(\frac{1}{|F_i|} \sum_{f \in F_i} \delta_{f K_i f^{-1}}, \frac{1}{|F_i|} \sum_{f \in F_i} \delta_{f H f^{-1}}\right) \xrightarrow{i \to\infty} 0 $$
	where $d_{\Probs{G}}$ is any compatible metric on the space $\Probs{G}$. We will say that the sequence $(K_i, F_i)$   is a \emph{Weiss approximation} for $H$ in $G$. 
\end{definition}

We now present an   explicit condition for a given sequence $(K_i, F_i)$   to constitute a Weiss approximation for the subgroup $H$.

\begin{prop}
	\label{prop:a numberical condition for a group to be co-sofic}
	Let $H$ be a subgroup of $G$. If  a given sequence  $K_i$ of finite index subgroups of $G$ with   finite-to-one transversals $F_i$ to $\mathrm{N}_G(K_i)$ satisfies
	$$ p_i(g) = \frac{|\{f \in F_i \: : \: g^f \in K_i \triangle H \}| }{|F_i|} \xrightarrow{i \to \infty} 0 $$
	for all elements $g \in G$ then $(K_i,F_i)$ is  a Weiss approximation for the subgroup $H$.
	%Then the subgroup $H$ is co-sofic.
\end{prop}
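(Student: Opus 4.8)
The plan is to reduce the weak-$*$ assertion of Definition \ref{def:cosofic subgroup} to a combinatorial count governed by the quantities $p_i(g)$. Write $\mu_i = \frac{1}{|F_i|}\sum_{f\in F_i}\delta_{fK_if^{-1}}$ and $\nu_i = \frac{1}{|F_i|}\sum_{f\in F_i}\delta_{fHf^{-1}}$ for the two empirical measures being compared; note that these are genuine elements of $\Probs{G}$ since $K_i$ and $H$ are subgroups, and that the only role of the finite-to-one transversal $F_i$ here is as a finite averaging set. Since $\Sub{G}$ is compact and metrizable, the weak-$*$ topology on $\Probs{G}$ is metrizable, so it suffices to show that $\int\varphi\dd\mu_i - \int\varphi\dd\nu_i \to 0$ for every $\varphi\in C(\Sub{G})$. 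I would justify this reduction by a standard compactness argument: if $d_{\Probs{G}}(\mu_i,\nu_i)$ did not tend to $0$, then some subsequence would stay $\ge\delta$; by compactness one could pass to a further subsequence along which $\mu_i$ and $\nu_i$ converge weak-$*$ to measures $\mu$ and $\nu$, and the convergence of integrals would force $\int\varphi\dd\mu=\int\varphi\dd\nu$ for all $\varphi$, hence $\mu=\nu$, contradicting $d_{\Probs{G}}(\mu,\nu)\ge\delta$.

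By Stone--Weierstrass the indicators of the basic clopen cylinder sets of $\Sub{G}$ span a uniformly dense subalgebra of $C(\Sub{G})$, so it is enough to prove $|\mu_i(C)-\nu_i(C)|\to 0$ for each such cylinder $C$. A cylinder is determined by a finite set $S\subseteq G$ together with a prescription of which elements of $S$ are to lie in the subgroup and which not; thus $C=\{L\in\Sub{G} : L\cap S = T\}$ for some $T\subseteq S$.

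The key step, and the heart of the matter, is the following elementary observation. With the convention $x^y=y^{-1}xy$, an element $s$ lies in $fK_if^{-1}$ if and only if $s^f\in K_i$, and likewise with $H$. Hence $fK_if^{-1}$ and $fHf^{-1}$ agree throughout $S$ unless $s^f\in K_i\triangle H$ for some $s\in S$; in particular $\mathbbm{1}_C(fK_if^{-1})\ne\mathbbm{1}_C(fHf^{-1})$ forces $s^f\in K_i\triangle H$ for at least one $s\in S$. Summing over $f\in F_i$ and applying a union bound over the finite set $S$ yields
$$|\mu_i(C)-\nu_i(C)|\le \frac{1}{|F_i|}\left|\{f\in F_i : \mathbbm{1}_C(fK_if^{-1})\ne\mathbbm{1}_C(fHf^{-1})\}\right|\le \sum_{s\in S}p_i(s).$$
Because $S$ is finite and $p_i(s)\to 0$ for every $s$ by hypothesis, the right-hand side tends to $0$, which is precisely the cylinder-level statement we need.

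Finally I would assemble the pieces. For a general $\varphi\in C(\Sub{G})$ and $\e>0$, approximate $\varphi$ uniformly to within $\e$ by a finite combination $\psi=\sum_j c_j\mathbbm{1}_{C_j}$ of cylinder indicators, so that $\left|\int\varphi\dd\mu_i-\int\varphi\dd\nu_i\right|\le 2\e + \sum_j |c_j|\,|\mu_i(C_j)-\nu_i(C_j)|$; letting $i\to\infty$ gives $\limsup_i\left|\int\varphi\dd\mu_i-\int\varphi\dd\nu_i\right|\le 2\e$, and since $\e$ is arbitrary this limit is $0$. Combined with the first reduction, this shows $d_{\Probs{G}}(\mu_i,\nu_i)\to 0$, so $(K_i,F_i)$ is a Weiss approximation for $H$. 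The only genuinely substantive point is the union-bound estimate above; the passages between weak-$*$ convergence, cylinder sets, and continuous functions are routine.
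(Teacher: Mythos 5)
Your argument is correct and complete: the reduction of the weak-$*$ distance to cylinder sets via Stone--Weierstrass and compactness, followed by the observation that $\mathbbm{1}_C(fK_if^{-1})\ne\mathbbm{1}_C(fHf^{-1})$ forces $s^f\in K_i\triangle H$ for some $s\in S$ and the union bound $|\mu_i(C)-\nu_i(C)|\le\sum_{s\in S}p_i(s)$, is exactly the standard proof. The paper itself offers no argument here, deferring entirely to \cite[Proposition 3.7]{levit2019infinitely}, and your write-up is essentially the proof given in that reference, so nothing further is needed.
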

\begin{proof}
This is precisely  \cite[Proposition 3.7] {levit2019infinitely}.
\end{proof}

The following result will be our main tool in establishing that a given invariant random subgroup is co-sofic. 

\begin{theorem}
	\label{thm:cosofic subgroup implies cosofic IRS}
	Let $G$ be an amenable group and $\mu \in \IRS{G}$.  Let $F_i$ be a fixed  Folner sequence in the group $G$. If   $\mu$-almost every subgroup $H$ admits a Weiss approximation $(K_i, F_i)$ with some sequence   of finite index subgroups $K_i$ of $G$  then $\mu$ is co-sofic.
\end{theorem}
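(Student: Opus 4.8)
The goal is to deduce co-soficity of $\mu$ from the hypothesis that $\mu$-almost every subgroup $H$ admits a Weiss approximation with respect to the \emph{fixed} Folner sequence $F_i$. The natural strategy is to turn the pointwise control of Definition \ref{def:cosofic subgroup} into an averaged, measure-theoretic statement, and for this the Lindenstrauss pointwise ergodic theorem for amenable groups is the right tool. First I would reduce to the case where $\mu$ is ergodic: since $\IRS{G}$ is a Choquet simplex and $\overline{\IRSfi{G}}$ is weak-$*$ closed and convex, it suffices to approximate each ergodic component, so we may assume $\mu$ is ergodic with respect to the conjugation action of $G$ on $\Sub{G}$.

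\textbf{Key steps.}
Fix an element $g \in G$. For each $i$, consider the quantity
$$ P_i = \int_{\Sub{G}} p_i^H(g) \dd\mu(H) = \int_{\Sub{G}} \frac{|\{ f \in F_i \: : \: g^f \in K_i(H) \triangle H \}|}{|F_i|} \dd\mu(H),$$
where $K_i(H)$ is the sequence of finite index subgroups furnished by the Weiss approximation for $H$. By Proposition \ref{prop:a numberical condition for a group to be co-sofic}, for $\mu$-almost every $H$ we have $p_i^H(g) \to 0$ as $i \to \infty$; since $p_i^H(g) \le 1$ is bounded, dominated convergence gives $P_i \to 0$. The heart of the argument is to now realize $P_i$ as the Folner average, over the fixed sequence $F_i$, of the indicator function $\mathbbm{1}_{\{ L \: : \: g \in L\}}$ evaluated along the orbit of $\mu$, so that the pointwise ergodic theorem of \cite{lindenstrauss2001pointwise} converts this average into a genuine integral against $\mu$. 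Concretely, swapping the order of summation,
$$ P_i = \frac{1}{|F_i|} \sum_{f \in F_i} \mu\bigl(\{ H \: : \: g^f \in K_i(H) \triangle H\}\bigr),$$
and one compares $\mu(\{H : g^f \in H\})$ with the corresponding measure for the finite-index data $K_i$. The construction $\frac{1}{|F_i|}\sum_{f\in F_i}\delta_{fK_i f^{-1}}$ assembles, in the limit, a weak-$*$ limit point $\nu$ of finite-index invariant random subgroups; the ergodic theorem identifies its statistics on the cylinder sets $\{L : g \in L\}$ with those of $\mu$, and since these cylinder sets generate the Borel structure of $\Sub{G}$, we conclude $\nu = \mu$, whence $\mu \in \overline{\IRSfi{G}}^{\text{weak-$*$}}$.

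\textbf{The main obstacle.}
The subtle point, and the one I expect to require the most care, is the interchange of the $H$-dependence of the approximating subgroups $K_i(H)$ with the ergodic averaging over $F_i$. In Definition \ref{def:cosofic subgroup} the finite index subgroups $K_i$ and transversals $F_i$ may \emph{a priori} depend on the individual subgroup $H$, whereas the ergodic theorem averages a single fixed function along the fixed Folner sequence. The hypothesis that the \emph{same} Folner sequence $F_i$ serves every $H$ is precisely what makes the two averages commensurable, but one still must verify measurability of $H \mapsto K_i(H)$ and control the counting measures $\frac{1}{|F_i|}\sum_{f \in F_i}\delta_{fK_if^{-1}}$ uniformly enough to extract a weak-$*$ limit lying in $\overline{\IRSfi{G}}$. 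Establishing that this limit is conjugation-invariant (hence a genuine invariant random subgroup) and that it agrees with $\mu$ on a separating family of cylinder functions is where the pointwise ergodic theorem does the essential work, and assembling these estimates is the crux of the proof.
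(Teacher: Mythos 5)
Your proposal assembles the right ingredients (ergodic decomposition, the Lindenstrauss theorem along the fixed Folner sequence, the empirical measures $\frac{1}{|F_i|}\sum_{f\in F_i}\delta_{fK_if^{-1}}$), but the route through the averaged quantity $P_i=\int p_i^H(g)\dd\mu(H)$ contains a genuine gap that you flag yourself and do not close: the integral presupposes a measurable selection $H\mapsto K_i(H)$, which the hypothesis does not supply, and there is no reason such a selection should exist. The paper's argument shows that this obstacle is not to be overcome but avoided. After reducing to ergodic $\mu$, apply the pointwise ergodic theorem to the conjugation action on $(\Sub{G},\mu)$: for $\mu$-almost every $H$ the empirical measures $\nu_i=\frac{1}{|F_i|}\sum_{f\in F_i}\delta_{fHf^{-1}}$ converge weak-$*$ to $\mu$. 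Intersecting this conull set with the conull set of Weiss approximable subgroups produces a \emph{single} subgroup $H$ enjoying both properties; for its associated sequence $K_i$ one has, directly from Definition \ref{def:cosofic subgroup}, that $d_{\Probs{G}}(\nu_i,\mu_i)\to 0$ where $\mu_i=\frac{1}{|F_i|}\sum_{f\in F_i}\delta_{fK_if^{-1}}$, and the triangle inequality gives $\mu_i\to\mu$. No averaging over $H$, no measurable selection, and no interchange of integrals is needed.

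Two further points in your sketch need repair. First, the worry about whether the limit is conjugation-invariant dissolves once you observe that $F_i$ is required to be a finite-to-one transversal to $\mathrm{N}_G(K_i)$: each coset of the normalizer is hit the same number of times, so $\mu_i$ is exactly the uniform probability measure on the finite conjugacy class of $K_i$ and hence already lies in $\IRSfi{G}$; nothing remains to be checked about the limit beyond weak-$*$ convergence. Second, knowing that a limit point $\nu$ agrees with $\mu$ on the cylinder sets $\{L : g\in L\}$ for every $g\in G$ does not force $\nu=\mu$; one needs agreement on finite Boolean combinations of such sets, i.e.\ on a dense subalgebra of $C(\Sub{G})$. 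In the single-orbit argument this issue never arises, since weak-$*$ convergence of $\nu_i$ to $\mu$ is exactly what the ergodic theorem delivers.
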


See \cite[Theorem  3.10]{levit2019infinitely}  for a detailed proof of our Theorem \ref{thm:cosofic subgroup implies cosofic IRS}. The proof crucially relies on the pointwise ergodic theorem for amenable groups due to Lindenstrauss \cite{lindenstrauss2001pointwise}. For the reader's convenience we reproduce the main ideas below.

\begin{proof}[Proof outline for Theorem \ref{thm:cosofic subgroup implies cosofic IRS}]
Making use of the ergodic decomposition, we may assume without loss of generality that the invariant random subgroup $\mu$ is ergodic. 

%See \cite[Corollary 2.6]{levit2019infinitely}  for a more detailed discussion.
 
Consider the probability measure preserving  action of the amenable group $G$ on the Borel space $(\Sub{G},\mu)$.  The pointwise ergodic theorem \cite{lindenstrauss2001pointwise} implies that for $\mu$-almost every subgroup $H \in \Sub{G}$ the sequence of probability measures $$\nu_i = \frac{1}{|F_{i}|} \sum_{f \in F_{i}} \delta_{c_f H}$$ converges to the invariant random subgroup $\mu$ as $i \to \infty$.  

By our assumption $\mu$-almost every subgroup   $H \in \Sub{G}$ is Weiss approximable  with respect to some sequence $(K_i, F_i)$. Therefore there exists a subgroup $H$  which is  Weiss approximable and at the same time the conclusion of the pointwise ergodic theorem applies to $H$. It follows from Definition \ref{def:cosofic subgroup} that the sequence of invariant random subgroups $$\mu_i = \frac{1}{|F_{i}|} \sum_{f \in F_{i}} \delta_{c_f K_i} \in \IRSfi{G}$$
satisfies $d_{\mathcal{G}}(\nu_i,\mu_i) \to 0$. Hence also $\mu_i$ converges to the invariant random subgroup $\mu$ as $i \to \infty$. We conclude that $\mu$ is co-sofic.
\end{proof}

While the ergodic theorem of \cite{lindenstrauss2001pointwise} requires the Folner sequence $F_i$ to be so called tempered, this can always be achieved by passing to a subsequence, so that this issue can be safely  ignored for our purposes.
  
\section{Invariant random subgroups of $V$}
\label{ref:vershik}

Consider the group  $\mathrm{Sym}_\text{fin}(\ZZ)$ of all finitely supported permutations of the set $\ZZ$ as well as its index two subgroup  $\UU{\infty} = \mathrm{Alt}_\text{fin}(\ZZ)$ consisting of  even permutations. 

Vershik \cite{vershik2012totally} had established a complete classification of all ergodic invariant random subgroups of the group $\mathrm{Sym}_\text{fin}(\ZZ)$. We recall this classification and apply it to study the invariant random subgroups of  $V = \UU{\infty} \rtimes \left<\bar{\sigma}\right>$. Our description follows closely that of Thomas' paper \cite{thomas2018characters}.

\subsection*{Random partitions}
Denote $\NN_0 = \NN \cup \{0\}$. Consider the  product  space $\NN_0^\ZZ$. To every point $\omega \in \NN_0^\ZZ$, we correspond the partition
$$ \ZZ = B^\omega_0 \amalg B^\omega_1 \amalg B^\omega_2  \amalg \cdots $$
where an integer $x \in \ZZ$ satisfies $x \in B^\omega_i$ if and only $\omega(x) = i$. Some members $B_i^\omega$ of this partition may be empty. Alternatively, we may think of the point $\omega$ as describing a coloring of the set $\ZZ$ using the colors $\NN_0$.

For $\omega \in \NN_0^\ZZ$, let $\mathrm{Sym}_\text{fin}(\omega)$ denote the   subgroup of $\mathrm{Sym}_\text{fin}(\ZZ)$ preserving the partition (or, alternatively, the coloring)  corresponding to $\omega$ and fixing $B_0^\omega$ pointwise. Namely, the color $ 0\in \NN_0$ is special in that $\mathrm{Sym}_\text{fin}(\omega)$ is required to fix every point $x \in \ZZ$ with $\omega(x) = 0$.   In other words
$$\mathrm{Sym}_\text{fin}(\omega) = \bigoplus_{i \in \NN} \mathrm{Sym}_\text{fin}(B_i^\omega).$$
In addition, there is a sign map
$$ \mathrm{sgn}_\omega :  \mathrm{Sym}_\text{fin}(\omega) \to \bigoplus_{i\in \NN} \ZZ/2\ZZ$$
  defined component-wise. Let 
$$\varepsilon : \bigoplus_{i\in \NN} \ZZ/2\ZZ \to \ZZ/2\ZZ$$
 be the map taking an element $\oplus_{i\in \NN} \ZZ/2\ZZ$ to the sum of all of  its   components. 
A permutation $g \in \mathrm{Sym}_\text{fin}(\omega)$ is even if and only if $\varepsilon \circ \mathrm{sgn}_\omega(g)=0$. The derived subgroup of $\mathrm{Sym}_\text{fin}(\omega)$ is the subgroup $\mathrm{Alt}_\text{fin}(\omega) =  \ker (\varepsilon  \circ \mathrm{sgn}_\omega)$.

We now describe a random model for such partitions.  Let $\alpha$ be a probability vector of the form
$$ \alpha = ( \alpha_0; \alpha_1, \alpha_2, \ldots ) $$
where $  \sum_{i\in\NN_0} \alpha_i = 1$, for all $ i \in \NN_0$ the $i$-th entry satisfies $0 \le \alpha_i \le 1$ and the sequence $(\alpha_i)_{i \in \NN}$  is monotone decreasing   in the sense that $\alpha_{i+1} \ge \alpha_i$ for all $i \in \NN$. It is \emph{not} required that $\alpha_0 \ge \alpha_1$. Consider the probability measure space $\Omega = (\NN_0^\ZZ, \alpha^\ZZ)$ where  $  \alpha^\ZZ$ is the Bernoulli probability measure. This means that the color of each point is chosen independently at random with respect to the probability measure    $\alpha$ at each coordinate. 
A \emph{random partition} of the set $\ZZ$ corresponding to the probability vector $\alpha$ is obtained  as described  above with respect to a   $\alpha^\ZZ$-random point $\omega \in \Omega$.

\begin{remark}
\label{rem:infinite}
Let $i \in    \NN_0$ be such that $\alpha_i > 0$. Then for  $\alpha^\ZZ$-almost every partition $\omega$ the member  $B_i^\omega$ is infinite, see e.g. \cite{vershik2012totally,thomas2018characters}.
\end{remark}

\begin{remark}
\label{rem:ergodicity}
The Bernoulli measure $\alpha^\ZZ$ is   $\mathrm{Sym}(\ZZ)$-invariant and the subgroup $\mathrm{Alt}_\text{fin}(\ZZ)$ is already acting ergodically. The proof of this fact is essentially the same as the well-known mixing argument for the ergodicity of the shift. See e.g. \cite[Theorem 1.30]{walters2000introduction}.
\end{remark}
 \subsection*{Vershik's classification theorem}

Roughly speaking, this theorem says that every invariant random subgroup of $\mathrm{Sym}_\text{fin}(\ZZ)$ arises from a random partition. More precisely (see also \cite{thomas2018characters}):

\begin{theorem}[\cite{vershik2012totally}]
\label{theorem:Vershik}
Let $\mu$ be an ergodic invariant random subgroup of $\mathrm{Sym}_\text{fin}(\ZZ)$. Then there is a probability vector $\alpha = (\alpha_0; \alpha_1, \alpha_2,\ldots)$ as above and  a subgroup $ S \le \oplus_{i \in \NN} \ZZ/2\ZZ$  such that $\mu = f^S_* \alpha^\ZZ$, where
$$  f^S : \Omega \to \Sub{\mathrm{Sym}_\text{fin}(\ZZ)}, \quad  f^S : \omega \mapsto \mathrm{sgn}_\omega^{-1}(S) \le \mathrm{Sym}_\text{fin}(\omega). $$
\end{theorem}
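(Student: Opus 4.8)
The plan is to reconstruct the invariant random subgroup from the combinatorial data it determines, showing it must be of the announced form $f^S_* \alpha^\ZZ$. Let $\mu$ be an ergodic invariant random subgroup of $\mathrm{Sym}_\text{fin}(\ZZ)$, and let $H$ denote a $\mu$-random subgroup. The first step is to extract the \emph{orbit partition} of $H$: for a subgroup $H \le \mathrm{Sym}_\text{fin}(\ZZ)$, partition $\ZZ$ into the $H$-orbits, with all the singleton (fixed-point) orbits amalgamated into a single distinguished block playing the role of color $0$. This assignment is $\mathrm{Sym}_\text{fin}(\ZZ)$-equivariant and Borel measurable, so it pushes $\mu$ forward to a conjugation-invariant random partition of $\ZZ$. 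The key structural input, which I would establish first, is that for $\mu$-almost every $H$ the restriction of $H$ to each non-trivial orbit block $B$ is as large as possible subject to a parity constraint: either all of $\mathrm{Sym}_\text{fin}(B)$ or exactly $\mathrm{Alt}_\text{fin}(B)$. This should follow from transitivity of $H$ on $B$ together with a primitivity/normal-subgroup argument, using that the only conjugation-invariant subgroups sitting transitively inside a finitary symmetric group on an infinite set are the full group and its alternating subgroup.

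Next I would identify the random partition as Bernoulli and pin down the probability vector $\alpha$. Invariance of the pushed-forward random partition under $\mathrm{Sym}_\text{fin}(\ZZ)$, combined with ergodicity of $\mu$, forces the partition to look like an exchangeable (hence, by de Finetti-type reasoning adapted to this setting) i.i.d.\ coloring; the frequencies of the blocks give the entries $\alpha_1, \alpha_2, \ldots$, sorted into monotone order, while $\alpha_0$ records the density of the fixed-point set. The monotonicity convention on $(\alpha_i)_{i \in \NN}$ is just a normalization of the unordered multiset of block densities. By Remark~\ref{rem:infinite} every block with positive density is almost surely infinite, which is what makes the parity dichotomy on each block meaningful. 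At this stage I have recovered the measure $\alpha^\ZZ$ and an identification of $H$'s orbit data with the colors of an $\alpha^\ZZ$-random point $\omega$, so that $H \le \mathrm{Sym}_\text{fin}(\omega)$ almost surely.

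It remains to encode the parity choices, i.e.\ to produce the subgroup $S \le \oplus_{i\in\NN}\ZZ/2\ZZ$ with $H = \mathrm{sgn}_\omega^{-1}(S)$. Having shown $H \le \mathrm{Sym}_\text{fin}(\omega) = \bigoplus_i \mathrm{Sym}_\text{fin}(B_i^\omega)$ with $H$ projecting onto all of $\mathrm{Sym}_\text{fin}(B_i^\omega)$ or onto $\mathrm{Alt}_\text{fin}(B_i^\omega)$ in each coordinate, Goursat's lemma applied to this (infinite) direct product says that $H$ is exactly the preimage under $\mathrm{sgn}_\omega$ of some subgroup $S$ of $\oplus_i \ZZ/2\ZZ$. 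The substantive point is that $S$ can be taken \emph{independent of $\omega$}: this is where ergodicity and conjugation-invariance are used, since conjugating by permutations of $\ZZ$ permutes the blocks of equal density and shows that the law of $S$ cannot depend on the random labeling, so $S$ is almost surely constant. I expect the main obstacle to be precisely this last coherence step — verifying that the parity pattern $S$ is a genuine deterministic invariant rather than itself random, and checking that the bookkeeping survives the passage to the infinite direct product (where Goursat's lemma needs the finite-support structure of $\oplus_i \ZZ/2\ZZ$ to apply cleanly). Once $S$ is fixed, unwinding the definitions gives $\mu = f^S_*\alpha^\ZZ$ as claimed.
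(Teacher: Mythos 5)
First, a point of comparison: the paper does not prove this statement at all --- Theorem \ref{theorem:Vershik} is imported from Vershik \cite{vershik2012totally} (see also \cite{thomas2018characters}), so there is no internal proof to measure your outline against. Judged on its own, your proposal correctly identifies the overall architecture of the classification (orbit partition, a Kingman/de Finetti step producing the Bernoulli vector $\alpha$, parity data $S$), but it has two genuine gaps, and both occur exactly where the conjugation-invariance of $\mu$ must do real work and where you instead invoke purely group-theoretic facts that are false for an individual subgroup. The first is your claim that the restriction of $H$ to each non-trivial orbit $B$ is $\mathrm{Sym}_\text{fin}(B)$ or $\mathrm{Alt}_\text{fin}(B)$, deduced ``from transitivity of $H$ on $B$ together with a primitivity/normal-subgroup argument.'' Transitivity does not give primitivity: there are transitive, imprimitive finitary permutation groups on infinite sets (for instance a finitary wreath product of $\ZZ/2\ZZ$ by $\mathrm{Sym}_\text{fin}(\NN)$ acting on $\NN\times\{0,1\}$), and the relevant theorem of Wielandt --- a primitive finitary group on an infinite set contains the alternating group --- simply does not apply to them. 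Showing that such imprimitive (or otherwise small) transitive actions occur with probability zero is the core of Vershik's theorem, and it needs the invariance and ergodicity of $\mu$, not just properties of one subgroup.

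The second gap is the Goursat step. Since every block $B_i^\omega$ with $\alpha_i>0$ is countably infinite (Remark \ref{rem:infinite}), the simple groups $\mathrm{Alt}_\text{fin}(B_i^\omega)$ are pairwise \emph{isomorphic}, and Goursat's lemma then explicitly permits diagonal subgroups such as $\{(g,\phi(g)) : g\in\mathrm{Alt}_\text{fin}(B_1^\omega)\}$ for an isomorphism $\phi$ induced by a bijection $B_1^\omega\to B_2^\omega$. Such a subgroup surjects onto the alternating group of each block and is transitive on each block, yet it is not of the form $\mathrm{sgn}_\omega^{-1}(S)$; in particular $H$ projecting onto $\mathrm{Sym}_\text{fin}(B_i^\omega)$ or $\mathrm{Alt}_\text{fin}(B_i^\omega)$ in each coordinate does not imply $H\supseteq\ker\mathrm{sgn}_\omega$. (Compare Lemma \ref{lem:group theory}, which requires the simple factors to be pairwise non-isomorphic precisely in order to exclude diagonals.) Ruling out these couplings between blocks is again a measure-theoretic argument, not an application of Goursat. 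The Kingman/exchangeability step and the final ``$S$ is deterministic by ergodicity'' step are essentially sound, though note that conjugation by finitary permutations only alters each block by a finite set, so the bookkeeping matching blocks to colors needs more care than ``permutes the blocks of equal density.''
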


 Vershik's theorem deals with invariant random subgroups of $\mathrm{Sym}_\text{fin}(\ZZ)$ rather than of $\mathrm{Alt}_\text{fin}(\ZZ)$. Let us rapidly outline how to go from one group to the other.

\begin{cor} 
\label{cor:Vershik Alt}
Let $\mu$ be an ergodic invariant random subgroup of  $\mathrm{Alt}_\text{fin}(\ZZ)$. Then $\mu$   is obtained as in  Theorem \ref{theorem:Vershik} with the subgroup $ S \le \oplus_{i \in \NN} \ZZ/2\ZZ$   satisfying $\varepsilon(S) = 0$.
\end{cor}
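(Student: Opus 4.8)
The plan is to deduce the corollary from Vershik's Theorem \ref{theorem:Vershik} by passing through the overgroup $\mathrm{Sym}_\text{fin}(\ZZ)$, exploiting that $\mathrm{Alt}_\text{fin}(\ZZ)$ sits inside it as a normal subgroup of index two. Fix once and for all an odd permutation $s \in \mathrm{Sym}_\text{fin}(\ZZ) \setminus \mathrm{Alt}_\text{fin}(\ZZ)$, so that $\mathrm{Sym}_\text{fin}(\ZZ)$ is generated by $\mathrm{Alt}_\text{fin}(\ZZ)$ together with $s$. Since $\mathrm{Alt}_\text{fin}(\ZZ)$ is normal, conjugation by $s$ preserves the closed subset $\Sub{\mathrm{Alt}_\text{fin}(\ZZ)} \subseteq \Sub{\mathrm{Sym}_\text{fin}(\ZZ)}$, and I will regard the given $\mu$ as a measure on $\Sub{\mathrm{Sym}_\text{fin}(\ZZ)}$ concentrated on subgroups contained in $\mathrm{Alt}_\text{fin}(\ZZ)$.

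First I would form the symmetrized measure $\bar\mu = \tfrac12(\mu + c_s\mu)$ and check that it is an \emph{ergodic} invariant random subgroup of $\mathrm{Sym}_\text{fin}(\ZZ)$. Invariance is immediate: $\mu$ is already $\mathrm{Alt}_\text{fin}(\ZZ)$-invariant, and averaging over the two cosets renders it $c_s$-invariant, hence invariant under the whole generated group $\mathrm{Sym}_\text{fin}(\ZZ)$. For ergodicity, let $E \subseteq \Sub{\mathrm{Sym}_\text{fin}(\ZZ)}$ be $\mathrm{Sym}_\text{fin}(\ZZ)$-invariant; then $E$ is in particular $\mathrm{Alt}_\text{fin}(\ZZ)$-invariant, so $\mu(E) \in \{0,1\}$ by ergodicity of $\mu$, while $c_s\mu(E) = \mu(c_s^{-1}E) = \mu(E)$ because $c_s^{-1}E = E$. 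Hence $\bar\mu(E) = \mu(E) \in \{0,1\}$.

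Now Theorem \ref{theorem:Vershik} applies to the ergodic invariant random subgroup $\bar\mu$ of $\mathrm{Sym}_\text{fin}(\ZZ)$ and produces a probability vector $\alpha$ and a subgroup $S \le \oplus_{i\in\NN}\ZZ/2\ZZ$ with $\bar\mu = f^S_*\alpha^\ZZ$. The key observation is that this representation forces $\bar\mu$ to already be ergodic for the subgroup $\mathrm{Alt}_\text{fin}(\ZZ)$: the map $f^S$ is $\mathrm{Sym}_\text{fin}(\ZZ)$-equivariant, hence $\mathrm{Alt}_\text{fin}(\ZZ)$-equivariant, while $\alpha^\ZZ$ is $\mathrm{Alt}_\text{fin}(\ZZ)$-ergodic by Remark \ref{rem:ergodicity}, and the push-forward of an ergodic measure along an equivariant map is ergodic. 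On the other hand $\bar\mu = \tfrac12(\mu + c_s\mu)$ displays $\bar\mu$ as a convex combination of the two $\mathrm{Alt}_\text{fin}(\ZZ)$-ergodic measures $\mu$ and $c_s\mu$. By uniqueness of the ergodic decomposition, the ergodicity of $\bar\mu$ forces $\mu = c_s\mu$, whence $\mu = \bar\mu = f^S_*\alpha^\ZZ$, which is precisely the form asserted in Theorem \ref{theorem:Vershik}.

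It remains to pin down the condition on $S$. By hypothesis $\mu$ concentrates on subgroups of $\mathrm{Alt}_\text{fin}(\ZZ)$, so for $\alpha^\ZZ$-almost every $\omega$ the group $f^S(\omega) = \mathrm{sgn}_\omega^{-1}(S)$ consists of even permutations, i.e.\ $\varepsilon \circ \mathrm{sgn}_\omega$ vanishes on it. For almost every $\omega$ the blocks $B_i^\omega$ with $\alpha_i>0$ are infinite by Remark \ref{rem:infinite}, so $\mathrm{sgn}_\omega$ surjects onto the relevant coordinates of $\oplus_{i\in\NN}\ZZ/2\ZZ$ and every element of $S$ is realized as $\mathrm{sgn}_\omega(g)$ for some $g \in f^S(\omega)$; the evenness of all such $g$ then translates exactly into $\varepsilon(S) = 0$. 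The main obstacle I anticipate is the ergodicity bookkeeping of the previous paragraph, namely verifying that the Vershik form of $\bar\mu$ genuinely collapses the symmetrization back onto $\mu$; by contrast, the translation of the support condition into $\varepsilon(S)=0$ should be routine.
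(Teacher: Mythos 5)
Your argument is correct and follows essentially the same route as the paper: symmetrize $\mu$ over the index-two extension, apply Vershik's Theorem \ref{theorem:Vershik} to the symmetrization, use Remark \ref{rem:ergodicity} to see that the resulting measure is already $\mathrm{Alt}_\text{fin}(\ZZ)$-ergodic, and invoke uniqueness of the ergodic decomposition to conclude $c_s\mu = \mu$. The only cosmetic difference is that you verify directly that $\bar\mu$ is $\mathrm{Sym}_\text{fin}(\ZZ)$-ergodic before applying Vershik, whereas the paper argues via the ergodic decomposition of the symmetrization; both are fine, and your explicit derivation of $\varepsilon(S)=0$ fills in a detail the paper leaves implicit.
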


In the following proof we rely on that fact that for any given group $G$ the space $\IRS{G}$ is a Choquet simplex. Therefore  every invariant random subgroup $\mu$ of $\mathrm{Alt}_\text{fin}(\ZZ)$ admits a \emph{unique} ergodic decomposition, and likewise for $\mathrm{Sym}_\text{fin}(\ZZ)$.

\begin{proof}[Proof of Corollary \ref{cor:Vershik Alt}]
 
Consider the element $\bar{\rho} \in \mathrm{Sym}_\text{fin}(\ZZ)$ given by $\bar{\rho} = (1,2)$, say. The alternating group $\mathrm{Alt}_\text{fin}(\ZZ)$ has index two in $\mathrm{Sym}_\text{fin}(\ZZ)$ and $\bar{\rho}$ represents the non-trivial coset.

It is easy to verify that the convex combination
$ \nu = \frac{1}{2}(\mu + \bar{\rho}_* \mu)$ is an invariant random subgroup of $\mathrm{Sym}_\text{fin}(\ZZ)$. If $\bar{\rho}_* \mu = \mu$ then     Vershik's Theorem \ref{theorem:Vershik} applied   to the invariant random subgroup $\nu = \mu$ directly implies  our Corollary.

We claim that $\bar{\rho}_*\mu \neq \mu$ is impossible. To see this, note that $\bar{\rho}_* \mu$ is  again an ergodic invariant random subgroup of $\mathrm{Alt}_\text{fin}(\ZZ)$.  So that the convex combination $\frac{1}{2}(\mu + \bar{\rho}_* \mu)$ is the $\mathrm{Alt}_\text{fin}(\ZZ)$-ergodic decomposition of $\nu$.  On the other hand, Theorem \ref{theorem:Vershik} combined with Remark 
\ref{rem:ergodicity} implies that every ergodic $\mathrm{Sym}_\text{fin}(\ZZ)$-invariant random subgroup is $\mathrm{Alt}_\text{fin}(\ZZ)$-ergodic. Therefore the $\mathrm{Sym}_\text{fin}(\ZZ)$-ergodic decomposition of $\nu$ coincides with its ergodic decomposition regarded as an invariant random subgroup of $\mathrm{Alt}_\text{fin}(\ZZ)$. This eliminates the possibility that $\bar{\rho}_*\mu \neq \mu$, as required.
\end{proof}

Recall that $\bar{\sigma} \in \mathrm{Sym}(\ZZ)$ is the infinitely supported permutation acting via $\bar{\sigma}(x) = x+1$ for all $x \in \ZZ$.  Moreover recall the notation $\UU{\infty} = \mathrm{Alt}_\text{fin}(\ZZ)$.

\begin{cor}
\label{cor:structure of normalized IRS of Uinft}
Let $\mu$ be an ergodic invariant random subgroup of $\UU{\infty}$. If $\mu$-almost every subgroup is normalized by $\bar{\sigma}^k u$ for some fixed $k \in \NN$ and some element $u \in \UU{\infty}$ depending on the subgroup,  then either $\mu = \delta_{\{e\}}$ or $\mu = \delta_{\UU{\infty}}$.
\end{cor}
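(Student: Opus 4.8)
The plan is to combine the classification of Corollary \ref{cor:Vershik Alt} with a law-of-large-numbers argument on the underlying Bernoulli coloring. First I would invoke Corollary \ref{cor:Vershik Alt} to write $\mu = f^S_* \alpha^\ZZ$ for a probability vector $\alpha = (\alpha_0; \alpha_1, \alpha_2, \ldots)$ and a subgroup $S \le \oplus_{i} \ZZ/2\ZZ$ with $\varepsilon(S) = 0$, so that a $\mu$-random subgroup is $H = H_\omega = \mathrm{sgn}_\omega^{-1}(S)$ for an $\alpha^\ZZ$-random coloring $\omega \in \NN_0^\ZZ$. The entire goal is then to show that the normalization hypothesis forces $\alpha$ to be a point mass, from which the two cases are read off directly.

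Next I would convert the hypothesis into a rigid relation on $\omega$. Since $0 \in S$ we have $H_\omega \supseteq \ker \mathrm{sgn}_\omega = \oplus_{i\ge1} \mathrm{Alt}_\text{fin}(B_i^\omega)$, and by Remark \ref{rem:infinite} every block $B_i^\omega$ with $\alpha_i > 0$ is almost surely infinite. Hence the simple groups $\mathrm{Alt}_\text{fin}(B_i^\omega)$ are exactly the minimal normal subgroups of $H_\omega$, while $\mathrm{Fix}(H_\omega) = B_0^\omega$. If $g = \bar{\sigma}^k u$ with $u \in \UU{\infty}$ normalizes $H_\omega$, then conjugation by $g$ is an automorphism of $H_\omega$; it therefore permutes these minimal normal subgroups and preserves $\mathrm{Fix}(H_\omega) = B_0^\omega$. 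Because $g\,\mathrm{Alt}_\text{fin}(B_i^\omega)\,g^{-1} = \mathrm{Alt}_\text{fin}(g(B_i^\omega))$ and this must again lie in $\mathrm{Sym}_\text{fin}(\omega)$, the set $g(B_i^\omega)$ is forced to be a single block, so $g$ permutes the blocks, say $g(B_i^\omega) = B_{\pi(i)}^\omega$ for a permutation $\pi$ of the colors with $\pi(0)=0$. Since $u$ has finite support, $g(x) = x+k$ for all large $x$, and comparing the two descriptions of the block containing $g(x)$ gives $\omega(x+k) = \pi(\omega(x))$ for all sufficiently large $x$.

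Then I would finish using the ergodic theorem for the (mixing, cf.\ Remark \ref{rem:ergodicity}) Bernoulli shift by $k$: for any two colors $a,b$ the density of $\{x : \omega(x) = a,\ \omega(x+k) = b\}$ equals $\alpha_a \alpha_b$ almost surely. Fixing one coloring $\omega$ in the full-measure set on which both the normalization hypothesis and this law of large numbers hold, the eventual relation $\omega(x+k) = \pi(\omega(x))$ makes that density vanish whenever $b \ne \pi(a)$, so $\alpha_a \alpha_b = 0$ for all $b \ne \pi(a)$. As the numbers $\alpha_a \alpha_b$ do not depend on $\omega$, this is a genuine constraint on $\alpha$: picking any $a$ with $\alpha_a > 0$ forces $\alpha_b = 0$ for every $b \ne \pi(a)$, whence $\alpha$ is the point mass at $a_1 := \pi(a) = a$. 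If $a_1 = 0$ then $H_\omega = \{e\}$ almost surely and $\mu = \delta_{\{e\}}$; if $a_1 \ge 1$ then $\omega$ is almost surely constant, so $B_{a_1}^\omega = \ZZ$ and $H_\omega \supseteq \mathrm{Alt}_\text{fin}(\ZZ) = \UU{\infty}$, which together with $H_\omega \le \UU{\infty}$ gives $H_\omega = \UU{\infty}$ and $\mu = \delta_{\UU{\infty}}$.

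The main obstacle is the middle step: establishing cleanly that the normalizing element genuinely permutes the blocks and thereby induces the color relation. This rests on the socle and fixed-point bookkeeping, which in turn uses that the blocks are almost surely infinite, and on handling the fact that both the permutation $\pi$ and the finite exceptional set depend on the subgroup. This last dependence is exactly what is dissolved by passing to the deterministic densities $\alpha_a\alpha_b$ in the final step, so that no uniform control over $\pi$ across subgroups is required.
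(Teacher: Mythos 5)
Your proposal is correct and follows essentially the same route as the paper: invoke the Vershik classification (Corollary \ref{cor:Vershik Alt}), observe that a normalizing element $\bar{\sigma}^k u$ must fix $B_0^\omega$ and permute the remaining blocks, and then show this is a probability-zero event unless the probability vector $\alpha$ is degenerate. Your density/law-of-large-numbers computation is simply a more detailed justification of the step the paper dispatches with the sentence that the probability of $\bar{\sigma}^k$ acting in such a structured way on a random partition is zero.
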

\begin{proof}
Apply Corollary \ref{cor:Vershik Alt} to classify the invariant random subgroup $\mu$. Let $\alpha$ be the resulting probability vector and $  l(\alpha) \in \NN \cup \{\infty\}$ be the number of the non-zero entries in  $\alpha$. We claim that $l(\alpha) = 1$. The conclusion follows immediately from this, for the only two possible probability vectors with $l(\alpha) = 1$ are either $\alpha_0 = (1;0,0,\ldots)$ or $\alpha_1 = (0;1,0,\ldots)$. The first one gives rise to the invariant random subgroup $\delta_{\{e\}}$ and the second one to $\delta_{\UU{\infty}}$.

Assume towards contradiction that $l(\alpha) \ge 2$. This means that $\mu$-almost  every subgroup preserves a   random partition $\omega$ admitting at least two infinite members, see Remark \ref{rem:infinite}. Such a subgroup is normalized by some $\bar{\sigma}^k u$ if and only if $\bar{\sigma}^k u$ leaves  the block $B_0^\omega$ invariant and permutes the blocks $B_1^\omega, B_2^\omega,\ldots$ among themselves. The probability that $\bar{\sigma}^k$ acts in such a   structured way with respect to a random partition $\omega$ is zero.
Since $\bar{\sigma}^k u$ differs from $\bar{\sigma}^k$ on at most a finite number of points, the probability with respect to $\mu$ of this happening is also zero. We arrive at a contradiction.
\end{proof}

\subsection*{Invariant random subgroups of $V$}
Recall that   $V = \UU{\infty} \rtimes \left<\bar{\sigma}\right>$.% where $\bar{\sigma} \in \mathrm{Sym}(\ZZ)$ is the permutation $\bar{\sigma}(x) = x+1$ for all $x \in \ZZ$.
\begin{cor}
\label{cor:Vershik IRS}
Let $\mu$ be an ergodic invariant random subgroup of $V$. Then exactly one of the  following two statements is true:
\begin{enumerate}
\item $\mu$-almost every subgroup satisfies $H \le \UU{\infty}$, or
\item $ \mu $ is the atomic probability measure supported on the subgroup $\UU{\infty}\rtimes \left<\bar{\sigma}^k\right>$ for some $k \in \NN$.
\end{enumerate}
\end{cor}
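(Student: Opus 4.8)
The plan is to study $\mu$ simultaneously through the quotient map $\pi : V \to V/\UU{\infty} \cong \ZZ$ and through the restriction map $\cdot_{|\UU{\infty}} : \IRS{V} \to \IRS{\UU{\infty}}$, and finally to eliminate a stray "skew cyclic" possibility by a shift argument. First I would push $\mu$ forward under $\pi$. Since $\pi_*\mu$ is an ergodic invariant random subgroup of the abelian group $\ZZ$, on which conjugation is trivial, it is extremal and hence a Dirac mass $\delta_{k\ZZ}$ for some integer $k \ge 0$. If $k = 0$ then $\pi(H)$ is trivial for $\mu$-almost every $H$, i.e. $H \le \UU{\infty}$ almost surely, which is exactly alternative (1). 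So from now on assume $k \ge 1$, so that $\pi(H) = k\ZZ$ for $\mu$-a.e. $H$.

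Next I would control $K := H \cap \UU{\infty}$ using the restriction $\mu_{|\UU{\infty}} \in \IRS{\UU{\infty}}$. For $\mu$-a.e. $H$ the subgroup $K$ is normal in $H$ because $\UU{\infty} \nrm V$, and since $\pi(H) = k\ZZ$ the group $H$ contains an element of the form $\bar{\sigma}^k u$ with $u \in \UU{\infty}$ (depending on $H$) that therefore normalizes $K$. As $\UU{\infty}$ is countable, the set of subgroups of $\UU{\infty}$ normalized by $\bar{\sigma}^k u$ for \emph{some} $u$ is a countable union of closed sets, hence Borel, and it has full $\mu_{|\UU{\infty}}$-measure. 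Passing to the ergodic decomposition of $\mu_{|\UU{\infty}}$, almost every ergodic component is supported on this set, so Corollary \ref{cor:structure of normalized IRS of Uinft} applies (with this fixed $k$) and forces each component to be $\delta_{\{e\}}$ or $\delta_{\UU{\infty}}$. Consequently $H \cap \UU{\infty} \in \{\{e\}, \UU{\infty}\}$ for $\mu$-a.e. $H$.

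The set $\{H : \UU{\infty} \le H\}$ is $V$-invariant, so by ergodicity of $\mu$ either $\UU{\infty} \le H$ almost surely or (combined with the previous step) $H \cap \UU{\infty} = \{e\}$ almost surely. In the first case $\pi(H) = k\ZZ$ together with $\UU{\infty} \le H$ forces $H = \pi^{-1}(k\ZZ) = \UU{\infty} \rtimes \langle \bar{\sigma}^k \rangle$, a single subgroup, giving alternative (2); note this subgroup is the full preimage of $k\ZZ$ and hence genuinely normal, so $\delta$ on it is invariant. The hard part will be excluding the remaining case $H \cap \UU{\infty} = \{e\}$, in which $\pi_{|H}$ is injective and $H = \langle \bar{\sigma}^k u_H \rangle$ is infinite cyclic.

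To rule this out I would select the positive generator $g_H$ of $H$ (the unique generator with $\pi(g_H) = k$, well defined as $k \neq 0$) and set $u_H = \bar{\sigma}^{-k} g_H \in \UU{\infty}$; the assignment $H \mapsto u_H$ is Borel and satisfies $u_{c_{\bar{\sigma}} H} = \bar{\sigma} u_H \bar{\sigma}^{-1}$. Pushing $\mu$ forward yields a probability measure $\lambda$ on $\UU{\infty}$ that is invariant under the support-shift $w \mapsto \bar{\sigma} w \bar{\sigma}^{-1}$. Partitioning the nontrivial elements of $\UU{\infty}$ by the minimal point of their support into sets $A_m$ with $A_{m+1} = \bar{\sigma} A_m \bar{\sigma}^{-1}$, shift-invariance forces all $\lambda(A_m)$ to be equal, hence all zero since they sum to at most $1$ over $m \in \ZZ$; thus $\lambda = \delta_e$. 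Then $u_H = e$ almost surely, so $\mu = \delta_{\langle \bar{\sigma}^k \rangle}$, which is impossible: no nontrivial finitely supported permutation commutes with $\bar{\sigma}^k$, so $\langle \bar{\sigma}^k \rangle$ is not normal in $V$ and $\delta_{\langle \bar{\sigma}^k \rangle}$ is not conjugation invariant, contradicting $\mu \in \IRS{V}$. Finally, alternatives (1) and (2) are mutually exclusive because they force $\pi(H)$ to be trivial, respectively equal to $k\ZZ \neq \{0\}$.
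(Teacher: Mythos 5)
Your proof is correct and follows essentially the same route as the paper: restrict $\mu$ to $\UU{\infty}$, invoke Corollary \ref{cor:structure of normalized IRS of Uinft} to force $H \cap \UU{\infty} \in \{\{e\}, \UU{\infty}\}$, identify the case $\UU{\infty} \le H$ with alternative (2), and kill the residual cyclic case. Your shift-invariance computation for the generator law $\lambda$ is just an explicit unwinding of the paper's one-line observation that $V$ has only countably many such cyclic subgroups, each with an infinite conjugacy class, so no invariant probability measure can charge them.
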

\begin{proof}
It is clear that the two possibilities are mutually exclusive. Assume that Condition (1) fails. As $\mu$ is ergodic, this means that $\mu$-almost every subgroup contains an element of the form $\bar{\sigma}^k u$ for a fixed $k \in \NN$ and some $u \in \UU{\infty}$ depending on that subgroup. The restriction of $\mu$ to $\UU{\infty}$ is an invariant random subgroup of $\UU{\infty}$. It follows from Corollary  \ref{cor:structure of normalized IRS of Uinft} that this restriction is   equal to a convex combination  $s \delta_{\{e\}} + (1-s)  \delta_{\UU{\infty}}$ for some $s \in \left[0,1\right]$. Write $\mu = s \mu' + (1-s) \delta_{\UU{\infty}\rtimes \left<\bar{\sigma}^k\right>}$ where $\mu'$-almost every subgroup has trivial intersection with $\UU{\infty}$. This means that $\mu'$-almost every subgroup is cyclic and generated by an element of the form $\bar{\sigma}^k u$. Since $V$ admits only countably many such cyclic subgroups and each has an infinite conjugacy class, we conclude that $s = 0$.  Condition (2) follows.
\end{proof}

\section{Invariant random subgroups supported on the derived subgroup}
\label{sec:internal case}

Let $\mathcal{P} = (n_1, n_2, \ldots)$ be a monotone increasing sequence of integers with $n_1 \ge 5$. Consider the corresponding B.H. Neumann group $G = G(\mathcal{P})$. In this section we construct a Weiss approximation to every subgroup of $G$ contained in its derived subgroup $N = N(\mathcal{P}) = \left[G(\mathcal{P}), G(\mathcal{P})\right]$.

Write $n_i = 2r_i+1$. Recall the finite subgroups $L_i(\mathcal{P})$ defined in \S\ref{sec:Neumann groups} for every $i \in \NN$.  In particular $N$ is the ascending union of the $L_i (\mathcal{P})$'s. Denote $L_i = L_i(\mathcal{P})$.

Consider the   subgroups $G_i$ given for each $i \in \NN$  by
$$ G_i = \ker \left(G(\mathcal{P}) \to \prod_{j=1}^i \UP{j}\right).$$
Since the projection of each $L_i $ to the product $ \prod_{j=1}^i \UP{j}$ is an isomorphism it follows that $L_i $ is a complement to $G_i$ in the group $G$, namely for each $i \in \NN$
$$  L_i  \cap G_i = \{e\} \quad \text{and} \quad  L_i G_i = G.$$ 
Finally, consider the abelianization map $G \to \left<\bar{\sigma}\right> \cong \ZZ$ and take 
$$ D_i = \ker \left( G \to \ZZ/(2r_i - 1)\ZZ \right). $$
Clearly both $G_i$ and $D_i$ are finite index normal subgroups of $G$ for every $i \in \NN$.

\begin{prop}
\label{prop:adapted}
Fix an element  $g \in N$. Then
$$ \frac{| \{j \in \setr{r_i}	\: : \: g^{\sigma^j} \in L_i \}| }{| \setr{r_i} | } \xrightarrow{i \to \infty} 1. $$
\end{prop}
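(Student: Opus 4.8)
The plan is to show that the complementary set of ``bad'' shifts $j \in \setr{r_i}$, those for which $g^{\sigma^j} \notin L_i$, has cardinality bounded independently of $i$, so that its proportion among the $2r_i+1$ values in $\setr{r_i}$ tends to $0$. Fix $g \in N$ and set $i_0 = i(g)$, so that $g \in L_{i_0}$; writing $g = (g_k)_{k \in \NN}$, this means $g_k = t(g)$ for every $k \ge i_0$, and the tail permutation $t(g)$ is supported in $\setr{r_{i_0}}$. The first step is to record the following characterization, immediate from the fact that the projection of $L_i$ onto $\prod_{j=1}^i \UP{j}$ is an isomorphism while the higher coordinates are diagonal: an element $h \in N$ lies in $L_i$ if and only if $h_k = h_i$ for all $k \ge i$, where the equality is read in $\mathrm{Sym}(\ZZ)$ via the natural inclusions $\setr{r_i} \subseteq \setr{r_k}$.

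The core of the argument is a ``no wraparound'' estimate. In each coordinate $k$ the generator $\sigma_k$ acts as the cyclic shift $x \mapsto x+1 \pmod{n_k}$ on $\setr{r_k}$, so that $(g^{\sigma^j})_k = t(g)^{\sigma_k^j}$ for every $k \ge i_0$, a permutation whose support is the shifted set $\{\, x - j \bmod n_k : x \in \mathrm{supp}(t(g)) \,\}$. I would show that whenever $|j| \le r_i - r_{i_0}$ this shift requires no reduction modulo $n_k$ for any $k \ge i$: since $|x| \le r_{i_0}$ for $x \in \mathrm{supp}(t(g))$, one has $|x-j| \le r_i \le r_k$, so $x - j \bmod n_k$ is the honest integer $x-j$. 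Checking the values as well as the support, one finds that $(g^{\sigma^j})_k$ coincides, for every $k \ge i$, with the single permutation $t(g)^{\bar{\sigma}^j}$ of $\ZZ$ obtained by conjugating by the genuine shift, a permutation supported in $\setr{r_i}$. By the characterization above this forces $g^{\sigma^j} \in L_i$.

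It then remains to count. For every $i \ge i_0$ the estimate yields the inclusion
$$ \{\, j \in \ZZ : |j| \le r_i - r_{i_0} \,\} \subseteq \{\, j \in \setr{r_i} : g^{\sigma^j} \in L_i \,\}, $$
so the numerator in question is at least $2(r_i - r_{i_0}) + 1$, while the denominator $|\setr{r_i}|$ equals $2r_i + 1$. Hence the ratio is at least $1 - \tfrac{2 r_{i_0}}{2 r_i + 1}$, which tends to $1$ as $i \to \infty$ because $r_{i_0}$ is fixed and $r_i \to \infty$; since the ratio never exceeds $1$, it converges to $1$. The only delicate point, and the step I would treat most carefully, is the no-wraparound claim: one must verify not merely that the support of the conjugate lands in $\setr{r_i}$ without wraparound, but that the images of points are also computed with no modular reduction, so that the permutation is \emph{the same} across all coordinates $k \ge i$. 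This is precisely what makes $g^{\sigma^j}$ genuinely diagonal above index $i$, and hence an element of $L_i$.
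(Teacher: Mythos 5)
Your proof is correct and follows essentially the same route as the paper: both arguments reduce to the observation that for all $j$ in the window $\setr{r_i - r_{i(g)}}$ the shifted support stays inside $\setr{r_i}$ with no wraparound, so that $g^{\sigma^j}$ is diagonal above coordinate $i$ and hence lies in $L_i$, and both then conclude with the count $\frac{2(r_i - r_{i(g)})+1}{2r_i+1} \to 1$. Your treatment is slightly more explicit about the no-wraparound verification (checking images as well as supports), which the paper leaves implicit, but there is no substantive difference.
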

\begin{proof}
Recall that $i = i(g) \in \NN$ is the smallest index so that $g \in L_i(\mathcal{P})$. We will prove the stronger statement that 
$$ \frac{| \{j \in \setr{r_i}	\: : \: L_{i(g)}^{\sigma^j} \le L_i \}| }{| \setr{r_i} | } \xrightarrow{i \to \infty} 1. $$
It follows from the definition of the groups $L_i$ that the condition $L_{i(g)}^{\sigma^j} \le L_i$ will hold provided that $j + \setr{i(g)} \subset \setr{r_i}$. This last condition is satisfied whenever $j  \in \setr{r_i-i(g)}$. As the index   $i(g)$ is being kept fixed, we have
$$ \frac{| \setr{r_i-i(g)}| }{| \setr{r_i} |} = \frac{2(r_i-i(g)) + 1}{2r_i+1}   \xrightarrow{i \to \infty} 1$$
as requried.
\end{proof}
The above Proposition \ref{prop:adapted} is to be compared with the notion of \emph{adapted subsets}  in the setting of metabelian groups  \cite[Definition 7.2 and Lemma 10.3]{levit2019infinitely}. 

We now construct the desired Weiss approximations with respect to the Folner sequence $F_i = F_i(\mathcal{P}) = \{\sigma^{j}  \: : \: j \in \setr{r_i-1} \} L_i $ studied in Proposition \ref{prop:Folner sets}.  

\begin{prop}
\label{prop:Weiss approx}
Let $H \le N$ be any subgroup. Then $H$ has a   Weiss approximation $(K_i, F_i)$ for some sequence $K_i \le G$ of finite index subgroups.
\end{prop}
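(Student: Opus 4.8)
The plan is to apply the numerical criterion of Proposition \ref{prop:a numberical condition for a group to be co-sofic} with the given Folner sequence $F_i = \{\sigma^{j} \,:\, j \in \setr{r_i-1}\} L_i$ of Proposition \ref{prop:Folner sets}. The subgroups I would use are
$$ K_i = (H \cap L_i)(G_i \cap D_i), \qquad i \in \NN. $$
Since $G_i \cap D_i$ is normal of finite index in $G$ and $H \cap L_i$ is a subgroup of $L_i$, each $K_i$ is a finite index subgroup of $G$. The defining feature of this choice is that $K_i$ simultaneously records the trace of $H$ on the finite group $L_i$ and is contained in $D_i$: the first property will handle conjugates of a fixed element that land inside $N$, while the second handles those lying outside $N$.

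Two structural facts drive the argument, and I would establish them first. The first is that $K_i \cap L_i = H \cap L_i$; here the inclusion $\supseteq$ is immediate, while if $x \in K_i \cap L_i$ is written $x = hm$ with $h \in H \cap L_i$ and $m \in G_i \cap D_i$, then $m = h^{-1}x \in L_i \cap G_i = \{e\}$, forcing $x = h$. The second is the identification of $\mathrm{N}_G(K_i)$ together with the verification that $F_i$ is a finite-to-one transversal to it. Writing $\pi : G \to G/G_i \cong L_i$ for the projection and $\bar p : G \to \ZZ/(2r_i-1)\ZZ$ for the reduced abelianization, the map $(\pi,\bar p)$ has kernel exactly $G_i \cap D_i$, so $G/(G_i \cap D_i)$ embeds into $L_i \times \ZZ/(2r_i-1)\ZZ$. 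As $H \cap L_i \le N \le D_i$, its image sits in the first factor, so conjugation on it acts only through $\pi$; this gives $\mathrm{N}_G(K_i) = \pi^{-1}\big(\mathrm{N}_{L_i}(H \cap L_i)\big) \supseteq G_i$. Consequently the cosets of $\mathrm{N}_G(K_i)$ are indexed by $L_i / \mathrm{N}_{L_i}(H \cap L_i)$, and for each fixed $j$ the elements $\{\sigma^{j} l : l \in L_i\}$ surject via $\pi$ onto $L_i$ and hence meet every such coset equally often; summing over the $2r_i-1$ values $j \in \setr{r_i-1}$ shows that $F_i$ meets each coset exactly $(2r_i-1)\,|\mathrm{N}_{L_i}(H \cap L_i)|$ times.

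It then remains to verify $p_i(g) \to 0$ for every $g \in G$, which splits into two cases. If $g \in N$, then by Proposition \ref{prop:adapted} one has $g^{\sigma^{j}} \in L_i$ for all but a vanishing proportion of $j \in \setr{r_i-1}$; for such $j$ and any $l \in L_i$ the conjugate $g^{\sigma^{j} l} = (g^{\sigma^{j}})^{l}$ again lies in $L_i$, so $g^{\sigma^{j} l} \in K_i \iff g^{\sigma^{j} l} \in K_i \cap L_i = H \cap L_i \iff g^{\sigma^{j} l} \in H$, contributing nothing to $K_i \triangle H$. Thus $p_i(g)$ is bounded by the proportion of $j \in \setr{r_i-1}$ with $g^{\sigma^{j}} \notin L_i$, which tends to $0$. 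If instead $g \notin N$, let $m \neq 0$ be its image in $G/N \cong \ZZ$; once $2r_i-1 > |m|$ the class of $m$ in $\ZZ/(2r_i-1)\ZZ$ is nonzero, and since $\bar p$ is conjugation-invariant we get $\bar p(g^{f}) \neq 0$ for all $f$, so $g^{f} \notin D_i \supseteq K_i$ and also $g^{f} \notin N \supseteq H$; hence $p_i(g) = 0$ for all large $i$. Proposition \ref{prop:a numberical condition for a group to be co-sofic} then delivers the Weiss approximation.

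The conceptual heart, and the step I expect to be the main obstacle, is the treatment of $g \notin N$. The naive candidate $K_i = (H \cap L_i) G_i$ reads off $H$ correctly on $N$ but fails entirely outside it: because $L_i \le N$, the projection $\pi$ already carries $N$ onto all of $G/G_i$, so $\pi$ cannot tell a nontrivial coset of $N$ apart from $N$, and conjugates of such a $g$ may well fall into $K_i$. The remedy is precisely the intersection with $D_i$, since the residue of the $\sigma$-exponent modulo $2r_i-1$ is a conjugation invariant that, for large $i$, separates $N$ from each of its nontrivial cosets. Arranging $K_i \le D_i$ while preserving $K_i \cap L_i = H \cap L_i$ is what makes both cases succeed, and the remaining technical point is to confirm that this modification leaves the transversal count undisturbed — which it does, as the cyclic factor $\ZZ/(2r_i-1)\ZZ$ is central in the relevant quotient and so does not affect the normalizer.
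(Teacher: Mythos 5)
Your proposal is correct and follows essentially the same route as the paper: the identical choice $K_i = (H\cap L_i)(G_i\cap D_i)$, the same key identity $K_i\cap L_i = H\cap L_i$, the same case split on $g\in N$ versus $g\notin N$ via Proposition \ref{prop:adapted} and the $D_i$-factor, all fed into Proposition \ref{prop:a numberical condition for a group to be co-sofic}. The only difference is that you compute $\mathrm{N}_G(K_i)$ explicitly, whereas the paper simply observes that $F_i$ is a finite-to-one transversal to $G_i\cap D_i$ and hence to any overgroup of it; both are fine.
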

\begin{proof}
Fix   $i \in \NN$ and consider the subgroup $K_i$ of $G$   given by
$$ K_i = \left(H\cap L_i \right)\left(G_i \cap D_i\right).$$
Since   $G_i \cap D_i$ is a finite index normal subgroup of $G$ it follows that   $K_i$ has finite index in $G$ as well. Moreover, since $\{\sigma^k \: :\: k \in \setr{r_i - 1} \}$ is a transversal to the normal subgroup $D_i$ and $L_i$ is a transversal to the normal subgroup $G_i$, the subset $F_i$ is a finite-to-one transversal to $G_i \cap D_i$ (actually,   it is a transversal since $G_i D_i = G$, but this fact is not needed). We deduce that   $F_i$ is a finite-to-one transversal to the subgroup $K_i$.

We claim that for all $i \in \NN$, the condition  $L_i \subset G \setminus (H \triangle K_i)$ holds. This condition  is equivalent to $$L_i \cap H = L_i \cap K_i.$$ The inclusion  $L_i \cap H \subset L_i \cap K_i$ is clear. For the converse, consider some $g \in L_i \cap K_i$ so that $g = h d$ for some pair of elements $h \in H \cap L_i$ and $d \in G_i \cap D_i$. As both $g$ and $h$ are contained in $L_i$ it follows that $ d \in L_i$ as well. However $L_i \cap G_i = \{e\}$ and $d \in G_i$ so that $d$ must be trivial. Hence  $g = h \in L_i \cap H$. 

We would like to apply  Proposition	\ref{prop:a numberical condition for a group to be co-sofic} and deduce that   $(K_i, F_i)$ is a Weiss approximation. In other words,   we need to verify that every element $g \in G $ satisfies
$$ p_i(g) = \frac{|\{f \in F_i \: : \: g^f \in K_i \triangle H \}| }{|F_i|} \xrightarrow{i \to \infty} 0. $$
To this end, fix an element $g \in G $.   First consider  the case where  $g \notin N$. Therefore   $g^f \notin H$ for all $f \in G$. Similarly, provided   $i \in \NN $ is sufficiently large $g^f \notin D_i$ which implies $g^f \notin K_i$ for all $f \in G$. In particular $p_i(g) = 0$ for all $i \in \NN$ sufficiently large.

It remains to consider the more interesting case   where $g \in N$. Since by the above $L_i$ belongs to   $G \setminus (H \triangle K_i)$,   it will suffice to show that
$$ q_i(g) = \frac{|\{f \in F_i \: : \: g^f \in L_i \}| }{|F_i |} \xrightarrow{i \to \infty} 1. $$
Every element $f \in F_i$ can be written as  $f = \sigma^j l $ where $j \in \setr{r_i - 1}$ and $l \in L_i$. Observe that 
$$ g^{f} \in L_i \Leftrightarrow g^{\sigma^j l} \in L_i \Leftrightarrow g^{\sigma^j  } \in L_i.$$
The element $g$ being kept fixed, the probability that $g^{\sigma^j  } \in L_i$ as $j$ ranges over the set $\setr{r_i - 1}$ tends to one as $i$ goes to infinity, see Proposition \ref{prop:adapted}. This completes the proof.
\end{proof}

Returning to the remark made in the final paragraph of \S\ref{sec:intro}, note that a special case    of Proposition \ref{prop:Weiss approx} constructs a Weiss approximation $(K_i, F_i)$ to the  subgroup $U = \U(\mathcal{P})$. In this case, one can check that the proof gives $$K_i =\ker(G(\mathcal{P}) \to \UP{i}).$$ It is interesting to note that for all $i \in \NN$ sufficiently large, the finite index subgroups $K_i$ do not contain the subgroup $U$. Indeed, the only finite index subgroups containing $U$ are the $D_i$'s, and they are clearly do not form a Weiss approximation to the subgroup $U$.

\section{Proof of the main theorem}
\label{sec:external case}

Let $\mu \in \IRS{G(\mathcal{P})}$ be any invariant random subgroup. We   now  show that $\mu$ is co-sofic, thereby completing the proof of   Theorem \ref{thm:main result IRS}. Our main result Theorem \ref{thm:main theorem } follows,   relying on the criterion   \cite{becker2019stability} reproduced here as  Theorem \ref{thm:BLT}.

We may assume  without loss of generality that $\mu$ is ergodic, see  \cite[Corollary 2.6]{levit2019infinitely} for clarification.

First consider     the case   where $\mu$-almost every subgroup $H$ is contained in $N$.  Therefore $\mu$-almost every subgroup admits a Weiss approximation $(K_i, F_i)$ with respect to our fixed Folner sequence $F_i = F_i(\mathcal{P})$ of   finite-to-one transversals as established in Proposition \ref{prop:Weiss approx}.  The fact that $\mu$ is co-sofic   follows from Theorem \ref{thm:cosofic subgroup implies cosofic IRS} and relying on  the Lindenstrauss pointwise ergodic theorem for amenable groups.

The second case to consider is where the projection of $\mu$ to the abelianization of $G$ is a non-trivial ergodic invariant random subgroup. Namely, the image of $\mu$-almost every subgroup $H$ in the  $G/G'$ is equal to $\left<\bar{\sigma}\right>^k$ for some $k \in \NN$. We may therefore consider the resulting push-forward invariant random subgroup on $V = \UU{\infty} \rtimes \left<\bar{\sigma}\right>$. Vershik's theorem and in particular its Corollary \ref{cor:Vershik IRS} implies that for $\mu$-almost every subgroup $H$ its image in $V$ has finite index. But according to the algebraic Proposition \ref{prop:large image implies finite index}, this means  that $\mu$-almost every subgroup has finite index in $G$ to begin with. In other words   $\mu \in \IRSfi{G}$, which is already a much stronger conclusion than $\mu$   being co-sofic. The theorem is now proved.

%\noindent\rule{\textwidth}{1pt}

\bibliography{Neumann}
\bibliographystyle{alpha}

\end{document}